\newtheorem{Theorem}{Theorem}[section]
\newtheorem{Lemma}[Theorem]{Lemma}
\newtheorem{Proposition}[Theorem]{Proposition}
\newtheorem{Remark}[Theorem]{Remark}
\numberwithin{equation}{section}
 \def\p{\partial} 
\def \Vh0{\stackrel{\circ}{V}_h}
\newcommand{\q}{\quad}    
\def\ll{\label}    
\newcommand{\lc}
{\mathrel{\raise2pt\hbox{${\mathop<\limits_{\raise1pt\hbox
{\mbox{$\sim$}}}}$}}}
\newcommand{\gc}
{\mathrel{\raise2pt\hbox{${\mathop>\limits_{\raise1pt\hbox{\mbox{$\sim$}}}}$}}}
\newcommand{\ec}
{\mathrel{\raise2pt\hbox{${\mathop=\limits_{\raise1pt\hbox{\mbox{$\sim$}}}}$}}}
\def\bb{\begin{equation}} \def\ee{\end{equation}}
\def\beqn{\begin{eqnarray}}  \def\eqn{\end{eqnarray}}
\def\beqnx{\begin{eqnarray*}} \def\eqnx{\end{eqnarray*}}
\def\bn{\begin{enumerate}} \def\en{\end{enumerate}}
\def\bd{\begin{description}} \def\ed{\end{description}}
\def \om {\Omega}
\def \ra {\rightarrow}
\def \o  {\overline}
\def \p  {\partial}
\def \d  {\displaystyle}
\def \e  {\varepsilon}
\title{Bang-bang property of  time optimal controls \\ for some semilinear heat equation}
\author{Lijuan Wang\thanks{School of
Mathematics and Statistics, Wuhan University, Wuhan 430072, China;
e-mail: ljwang.math@whu.edu.cn. This work was supported by the
National Natural Science Foundation of China under grant 11371285.} \q and \q Qishu Yan\thanks{School of Mathematics
and Statistics, Wuhan University, Wuhan 430072, China. e-mail:
yanqishu@whu.edu.cn.}}
\begin{document}

\date{}

\maketitle

\begin{abstract} In this paper, we derive a bang-bang property of  a kind of time optimal control problem
for some semilinear heat equation on bounded $C^2$ domains (of the Euclidean space),
with homogeneous Dirichlet boundary condition and controls distributed on an open and non-empty subset
of the domain where the equation evolves.  \\

\noindent\textbf{2010 Mathematics Subject Classifications.}\quad 35K58, 49J20, 49J30, 93B07\\

\noindent\textbf{Keywords.}\quad time optimal controls, bang-bang property, semilinear heat
equation, observability estimate from measurable sets
\end{abstract}

\section{Introduction}

In this paper, we assume that $\om$ is a bounded domain in $\mathbb{R}^d, d\geq 1$, with a
$C^2$ boundary $\partial\Omega$; $\omega$ is an open and non-empty subset of $\om$; $\chi_{\omega}$ is the
characteristic function of the set $\omega$. Define, for an arbitrarily fixed $M>0$,
$q\in (d,\infty)$ if $d\geq 2$ and $q\in [2,\infty)$ if $d=1$,
a control constraint  set:
\begin{eqnarray*}
{\cal U}\triangleq\{u: [0,+\infty)\ra L^q(\om)\;\mbox{is
measurable}\; :\; \;\|u(t)\|_{L^q(\om)}\leq M\;\mbox{for almost
all}\;t>0\}.
\end{eqnarray*}
Let
$f:\mathbb{R}\ra \mathbb{R}$ be a function holding the following properties:\\

\noindent $(H_1)$ $f:\mathbb{R}\ra \mathbb{R}$ is locally Lipschitz;\\
\noindent $(H_2)$ $f(y) y\geq 0$ for all $y\in
\mathbb{R}$.\\

\noindent We arbitrarily fix a  $y_0\in C_0(\om)\setminus\{0\}$.
Here $C_0(\om)\triangleq\{y\in C(\o \om): y=0\;\mbox{on}\;\partial\om\}$.

The controlled semilinear heat equation  under consideration is as follows:
\begin{equation}
\left\{
\begin{array}{ll}
y_t-\Delta y+f(y)=\chi_{\omega} u&\mbox{in}\;\;\om\times
(0,+\infty),\\
y=0&\mbox{on}\;\;\p \om\times (0,+\infty),\\
y(0)=y_0&\mbox{in}\;\;\om,
\end{array}\right.\ll{eqn:1}\end{equation}
where  $u\in \mathcal{U}$. For each $u\in \mathcal{U}$ and $T>0$, Equation (\ref{eqn:1}) has a unique solution in $C([0,T];C_0(\om))$,
denoted it by $y(\cdot;y_0,u)$ (see Proposition~\ref{Energy:2}).
Throughout the paper, we will omit the
variables $x$ and $t$ for functions of $(x,t)$ and  the
variable $x$ for functions of $x$, if there is no risk of causing any
confusion.

Define the following admissible set of controls:
\begin{eqnarray*}
{\cal U}_{ad}\triangleq\{u\in \mathcal{U}: y(\cdot;y_0,u)\in C([0,T];C_0(\om))\;\mbox{and}\;
y(T;y_0,u)=0\;\mbox{for some}\;T>0\}.
\end{eqnarray*}
Each control in $\mathcal{U}_{ad}$ is called an admissible control.
Because of the properties $(H_1)$ and $(H_2)$, it is proved that the
set ${\cal U}_{ad}$ is not empty (see Proposition~\ref{Exist}). For
each $u\in \mathcal{U}_{ad}$, we set
\begin{equation*}
T(u)\triangleq \min\{T:\; y(T;y_0,u)=0\}.
\end{equation*}
The above minimum can be reached because the solution $y(\cdot;y_0,u)$ can be treated as
a continuous function from $[0,+\infty)$ to $C_0(\om)$.

Now, the time optimal control problem under consideration is as follows:
\begin{eqnarray*}
(P)\;\;\;\;\;T^*\triangleq\inf_{u\in \mathcal{U}_{ad}} T(u).
\end{eqnarray*}
In this problem, the number $T^*$ is called the optimal time; a
control  $u^*\in {\cal U}_{ad}$, with $y(T^*;y_0,u^*)=0$, is called a time optimal control (or optimal control
for simplicity). It is proved that $(P)$ has  optimal controls
(see Proposition~\ref{Optimal}).

The main result of this paper is stated as follows:
\begin{Theorem}\label{Bang} The problem $(P)$ holds the bang-bang property: any optimal control $u^*$ satisfies that
$\|u^*(t)\|_{L^q(\om)}=M$ for a.e. $t\in
(0,T^*)$.
\end{Theorem}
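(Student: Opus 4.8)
The plan is to argue by contradiction using the standard two-step strategy for bang-bang results: first reduce the problem to a control-theoretic observability-type statement via a separation/maximum-principle argument, and then exploit a unique continuation or observability estimate from measurable sets to force the control to saturate the constraint. So I would begin by assuming that some optimal control $u^*$ fails the bang-bang property, i.e.\ there exists a measurable set $E\subset(0,T^*)$ of positive measure on which $\|u^*(t)\|_{L^q(\om)}\leq M-\delta$ for some $\delta>0$. The goal is to derive a contradiction with the time optimality of $u^*$, typically by showing that the state could be steered to $0$ strictly before $T^*$.

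The second step is to linearize. Writing $y^*=y(\cdot;y_0,u^*)$ for the optimal trajectory, I would consider the linearized equation along $y^*$, namely the operator $\partial_t-\Delta+a(x,t)$ where $a(x,t)$ is a bounded potential arising from $f$ (using $(H_1)$ local Lipschitz continuity together with the fact that $y^*\in C([0,T^*];C_0(\om))$ is bounded, so $a$ is essentially the difference quotient of $f$ along $y^*$ and lies in $L^\infty$). A Pontryagin-type maximum principle for $(P)$ should give a nontrivial adjoint state $\varphi$ solving the backward equation $-\varphi_t-\Delta\varphi+a\varphi=0$ with appropriate terminal data, together with the pointwise maximum condition that at a.e.\ time $t$, the optimal control maximizes $\int_\om \chi_\omega u\,\varphi\,dx$ over the ball $\|u\|_{L^q(\om)}\leq M$. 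By duality of $L^q$ and $L^{q'}$, this maximum condition forces $\|u^*(t)\|_{L^q(\om)}=M$ precisely whenever $\chi_\omega\varphi(t)\neq 0$ in $L^{q'}(\omega)$.

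Hence the bang-bang property reduces to proving that $\chi_\omega\varphi(t)\neq 0$ for a.e.\ $t\in(0,T^*)$, which is exactly an observability/unique-continuation statement: if $\varphi$ vanishes on $\omega$ over a set of times of positive measure, then $\varphi\equiv 0$, contradicting nontriviality of the adjoint. This is where the hypotheses on the domain ($C^2$ boundary) and the keyword ``observability estimate from measurable sets'' enter. I would invoke an $L^\infty$-type observability inequality from measurable sets in time for the backward heat operator with $L^\infty$ potential $a$, of the form $\|\varphi(0)\|_{L^1(\om)}\leq C\int_E\|\chi_\omega\varphi(t)\|_{L^{q'}(\omega)}\,dt$ (or an analogous estimate valid on measurable time-sets), which propagates the smallness of $\varphi$ on $E\cap\omega$ to global smallness and thereby forces $\varphi\equiv 0$.

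The main obstacle I anticipate is precisely the observability estimate from measurable sets: unlike the classical observability over a full time interval, one needs the inequality to hold when the observation is restricted to a set $E$ of positive measure in time (and the control region $\omega$ in space), and one must handle the lower-order potential $a\in L^\infty$ uniformly. This typically rests on a refined spectral/propagation-of-smallness argument or a telescoping-series/frequency-function technique, and the nonlinearity $f$ must be controlled through the a priori bound on $y^*$ to guarantee $a\in L^\infty$; verifying that the constant $C$ in the observability estimate is independent of the relevant parameters (and that the adjoint $\varphi$ has enough regularity to make the trace on $\omega$ meaningful) will be the delicate technical heart of the argument.
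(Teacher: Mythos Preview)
Your approach is genuinely different from the paper's, and it has a real gap at the very first step.

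The paper does \emph{not} use a Pontryagin maximum principle or an adjoint state at all. Instead, under the contradiction hypothesis that $\|u^*(t)\|_{L^q(\Omega)}\le M-\varepsilon_0$ on a set $E^*$ of positive measure, it constructs directly an admissible control reaching $0$ in time strictly less than $T^*$. The construction is: (i) fix $\delta_0>0$ small and set $z^*_{\delta_0}(t)=y^*(t+\delta_0)$, which solves the equation on $(0,T^*-\delta_0)$ with control $u^*(\cdot+\delta_0)$ and hits $0$ at $T^*-\delta_0$; (ii) invoke a local null-controllability result (the paper's Proposition~3.1, proved by linearizing around $z^*_{\delta_0}$, using the observability estimate from measurable sets for the linearized adjoint, and closing with Kakutani's fixed point theorem) to steer the small error $y^*(\delta)-y^*(\delta_0)$ to zero on $(0,T^*-\delta_0)$ with a correction control supported in the shifted set $E^*_{\delta_0}$ and of $L^\infty(L^q)$-norm at most $\varepsilon_0$; (iii) glue $u^*$ on $(0,\delta)$ with the shifted-plus-corrected control afterward. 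The correction fits inside the slack $\varepsilon_0$, so the glued control is admissible and reaches $0$ at $T^*-\delta_0+\delta<T^*$. Thus the observability estimate enters only through the controllability proposition, not through an adjoint attached to a maximum principle.

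The gap in your plan is the maximum principle itself. For a \emph{semilinear} equation with the target equal to the single point $\{0\}$, there is no standard way to obtain a nontrivial adjoint: the usual separation argument requires convexity of the reachable set (which fails here), and the alternative via an interior point of the target is unavailable since the target is a singleton. The paper's introduction explicitly flags the maximum-principle-plus-unique-continuation route as the method appropriate ``when the target set is a ball,'' precisely to contrast it with the present point-target situation. So your Step~2 (existence of a nontrivial $\varphi$ satisfying the maximum condition) is the missing idea, not the observability estimate; the latter is indeed the technical heart, but it is deployed on the control side (null controllability from a measurable time-set, then a fixed-point argument to pass to the nonlinear equation), not on the adjoint side.
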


The bang-bang property is one of the most important and interesting
properties of time optimal control problems.
 To our best knowledge, this property was first built up, via a smart construction manner,
 for time optimal control problems of linear abstract  equations in Banach spaces by H.O.Fattorini (see \cite{Fattorini:1}).
  But in the
context of the distributed control of the heat equation, the results
in \cite{Fattorini:1} only apply for the very special case where the
control is distributed everywhere in the domain, i.e.
$\omega=\Omega$.
 Since then, such property has been studied for time optimal control problems of linear and semilinear
parabolic differential equations,  where  controls are  distributed everywhere
in the domain $\Omega$, in many papers (see e.g.
\cite{Barbu:1}, \cite{Barbu:2}, \cite{Fattorini:2}, \cite{Lions},
\cite{Wang-Wang} and the references therein).
 It is worth mentioning the following studies on the bang-bang property for time optimal control problems of
 parabolic equations with control restricted on a subset $\omega$ of $\Omega$.
  In \cite{Schmidt}, the ¡°bang-bang¡± property was obtained for a time optimal control problem of
 the heat equation with pointwise boundary
control constraints. Some assumptions on the control bound was imposed there.
 Partially motivated by the study \cite{Schmidt}, the authors in  \cite{Mizel} first realized  that
 the bang-bang property can be implied by the null-controllability of the system with controls
 restricted over any set of positive measure in a time
 interval. They further  proved such null-controllability for  one-dimensional
heat equation with boundary controls.
The aforementioned null-controllability was established  in \cite{Wang} for internally
controlled heat equations over domains $\Omega\subset\mathbb{R}^d$, $d\geq 1$,
As a consequence, the bang-bang property
holds for time optimal control problems of the corresponding equations.
Motivated by these results, the authors in \cite{Phung-wang-zhang}, \cite{Kunisch-wang:2}
and \cite{Kunisch-wang:3} realized that the bang-bang property for time optimal controls of
semilinear heat equations can be obtained by combining a strategy based on null controllability
of the system, where the control functions act on a measurable set, and a fixed point argument.
The main differences between this work and  \cite{Phung-wang-zhang},  \cite{Kunisch-wang:2}
, \cite{Kunisch-wang:3} are as follows: In \cite{Phung-wang-zhang}, the semilinear term is
globally Lipschitz and satisfies the sign condition $(H_2)$. Therefore, the bang-bang property is global, i.e., it holds
for any initial datum and any control bound. In \cite{Kunisch-wang:2} and \cite{Kunisch-wang:3}, the controlled equations are
Burgers equation and a general semilinear heat equation without sign condition $(H_2)$. There the bang-bang property is local
with respect to the initial datum and the control bound. In our paper, the semilinear
term is locally Lipschitz with sign condition, and the global bang-bang property is derived.
Hence, this paper is a nontrivial extension of the above-mentioned works.
It should be pointed out that if $y_0\in L^\infty(\om)$, by the similar arguments as
those in this paper, the global bang-bang property still holds.

It should be pointed out that in \cite{Wang}, \cite{Phung-wang-zhang},  \cite{Kunisch-wang:2}, \cite{Kunisch-wang:3} and this paper,
the controlled systems are time invariant and targets are a single point in state space.
The proofs rely on the translation invariant of the systems. This method does
not work for the time varying systems. In \cite{Wang-Xu-Zhang}, when the controlled systems read as follows:
\begin{equation*}
\left\{
\begin{array}{ll}
y_t-\Delta y+(a_1(x)+a_2(t)) y=\chi_{\omega} u&\mbox{in}\;\;\om\times
(0,+\infty),\\
y=0&\mbox{on}\;\;\p \om\times (0,+\infty),\\
y(0)=y_0&\mbox{in}\;\;\om,
\end{array}\right.\end{equation*}
and the target was a single point, the bang-bang property of time optimal controls was obtained.
The strategy used there was totally different from the existing works. Instead of studying directly the time optimal
control problem, the authors started from studying properties of optimal norms for norm optimal control problems.
However, when the potential $a_1(x)+a_2(t)$ is replaced by $a(x,t)$, whether the bang-bang property holds
or not is still an open problem.

When the target set is a ball, there is another possible way to
get bang-bang properties for time optimal control problems of differential equations. More precisely,
one can expect to derive bang-bang properties from the Pontryagin maximum principle
and unique continuation property for the corresponding equations.
Here, we would like to mention some related papers \cite{Wang-Wang1}, \cite{Kunisch} and \cite{Wang-Kunisch}.

To prove  Theorem~\ref{Bang},
we need an observability estimate from a measurable set
for heat equations with lower terms, and then use
 the Kakutani fixed point theorem. To this end, we introduce
 the following heat equation:
\begin{equation}\label{eqn:3}
\left\{
\begin{array}{lll}
\varphi_t-\Delta \varphi+a\varphi=0&\mbox{in}&\Omega\times (0,T),\\
\varphi=0&\mbox{on}&\partial\Omega\times (0,T),\\
\varphi(0)\in L^2(\Omega).&&
\end{array}\right.
\end{equation}
Here, $a\in L^\infty(\Omega\times (0,T))$.
Then
\begin{Lemma}\label{eqn:4}\cite{Phung-wang-zhang}
Let $E\subset (0,T)$ be a subset of positive measure. Then any solution to Equation (\ref{eqn:3})
satisfies the following observability estimate:
\begin{equation}\label{eqn:5}
\|\varphi(T)\|_{L^2(\Omega)}\leq e^{C(\Omega,\omega,E)} e^{C(\Omega,\omega)(1+T\|a\|_{L^\infty(\Omega\times (0,T))}
+\|a\|_{L^\infty(\Omega\times (0,T))}^{2/3})}
\int_{\omega\times E} |\varphi(x,t)|\,dx\,dt.
\end{equation}
\end{Lemma}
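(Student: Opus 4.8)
The plan is to deduce the estimate \eqref{eqn:5} from a pointwise-in-time interpolation (quantitative unique continuation) inequality, and then to pass from a single observation instant to the space-time integral over $\omega\times E$ by a telescoping-series argument anchored at a Lebesgue density point of $E$. Two analytic ingredients are needed. The first is the elementary energy estimate: multiplying \eqref{eqn:3} by $\varphi$ and integrating over $\Omega$, writing $\|a\|_{L^\infty}$ for $\|a\|_{L^\infty(\Omega\times(0,T))}$, one obtains
\begin{equation*}
\|\varphi(t_2)\|_{L^2(\Omega)}\le e^{\|a\|_{L^\infty}(t_2-t_1)}\|\varphi(t_1)\|_{L^2(\Omega)},\qquad 0\le t_1\le t_2\le T,
\end{equation*}
which transports $L^2$-norms forward in time and is the source of the $T\|a\|_{L^\infty}$ term in \eqref{eqn:5}. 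The second, and decisive, ingredient is a H\"older-type interpolation inequality at a fixed observation time.

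Concretely, I would first prove that there exist $C=C(\Omega,\omega)>0$ and $\theta=\theta(\Omega,\omega)\in(0,1)$ such that for all $0\le t_1<t_2\le T$,
\begin{equation*}
\|\varphi(t_2)\|_{L^2(\Omega)}\le\left(C\,e^{C\left(1+\frac{1}{t_2-t_1}+\|a\|_{L^\infty}^{2/3}\right)}\int_\omega|\varphi(x,t_2)|\,dx\right)^{\theta}\|\varphi(t_1)\|_{L^2(\Omega)}^{1-\theta}.
\end{equation*}
This is the standard consequence of a global Carleman estimate for the heat operator $\partial_t-\Delta$ on $\Omega\times(t_1,t_2)$ with weights singular at the endpoints: the transport of information from $\omega$ to all of $\Omega$ produces the factor $e^{C/(t_2-t_1)}$, while the zeroth-order term $a\varphi$ is absorbed into the principal part by choosing the large Carleman parameter of order $\|a\|_{L^\infty}^{2/3}$, which is exactly the origin of the exponent $\|a\|_{L^\infty}^{2/3}$ appearing in \eqref{eqn:5}.

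Next I would turn the interpolation inequality into an additive form by Young's inequality: for every $\varepsilon\in(0,1)$,
\begin{equation*}
\|\varphi(t_2)\|_{L^2(\Omega)}\le\varepsilon\,\|\varphi(t_1)\|_{L^2(\Omega)}+\varepsilon^{-\frac{1-\theta}{\theta}}e^{C\left(1+\frac{1}{t_2-t_1}+\|a\|_{L^\infty}^{2/3}\right)}\int_\omega|\varphi(x,t_2)|\,dx.
\end{equation*}
Then, fixing a Lebesgue density point $\ell\in(0,T)$ of $E$ and a decreasing geometric sequence $t_m\downarrow\ell$, I would apply this inequality with $t_1$ taken just below $t_{m+1}$ and with $t_2=t$ integrated over $E\cap(t_{m+1},t_m)$, a set whose measure is controlled from below by the density-point property. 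Using the energy estimate to compare $\|\varphi(t)\|_{L^2(\Omega)}$ with $\|\varphi(t_m)\|_{L^2(\Omega)}$, one obtains a recursion relating $\|\varphi(t_m)\|_{L^2(\Omega)}$ to $\varepsilon_m\|\varphi(t_{m+1})\|_{L^2(\Omega)}$ plus $\int_{\omega\times(E\cap(t_{m+1},t_m))}|\varphi|\,dx\,dt$. Choosing $\varepsilon_m$ small enough (depending on $\theta$ and the density profile) so that the intermediate norms telescope, summing over $m$, and using the energy estimate once more to transport the surviving norm up to time $T$, one arrives at \eqref{eqn:5}, with $e^{C(\Omega,\omega,E)}$ absorbing the geometric constants determined by $\ell$ and $E$.

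The main obstacle is the interpolation inequality with the explicit dependence on $\|a\|_{L^\infty}$: one must run the Carleman estimate for the parabolic operator and track the Carleman parameter carefully so that the potential enters only through $1+T\|a\|_{L^\infty}+\|a\|_{L^\infty}^{2/3}$, rather than exponentially in $\|a\|_{L^\infty}$. The secondary difficulty is purely combinatorial: ensuring the telescoping series converges, which forces the ratio of $\{t_m\}$ and the sequence $\{\varepsilon_m\}$ to be calibrated against the blow-up $e^{C/(t_m-t_{m+1})}$ so that it is dominated by the geometric gain coming from the factor $\theta^m$. Since the estimate is quoted from \cite{Phung-wang-zhang}, these steps may be cited rather than reproduced.
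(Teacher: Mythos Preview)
The paper does not prove this lemma at all: it is simply quoted from \cite{Phung-wang-zhang} (and, for the original case of convex $\Omega$, from \cite{Phung}), with only a historical remark following the statement. Your sketch---energy estimate, H\"older interpolation inequality with the $\|a\|_{L^\infty}^{2/3}$ dependence coming from the choice of Carleman parameter, and the telescoping argument at a Lebesgue density point of $E$---is precisely the strategy used in those references, so your proposal is correct and consistent with the cited source; there is no in-paper proof to compare it against.
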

Here and throughout this paper, we shall use $C(\dots)$ to denote several positive constants
depending on what are enclosed in the bracket.
\begin{Remark}
The estimate (\ref{eqn:5}) is an observability inequality from a measurable set.
It was first established  for the case that $\Omega$ is convex in \cite{Phung}.
In \cite{Phung-wang-zhang},
 the convexity assumption on $\Omega$ was successfully dropped. Later on, the
 regularity assumption on the potential $a$ was relaxed to $a\in L^\infty(0,T;L^q(\om))$ in \cite{Kunisch-wang:2}.
\end{Remark}

The rest of the paper is organized as follows:  In Section 2, we provide some
existence results and prior estimates of solutions for parabolic equations. These results are required
to obtain the bang-bang property of the problem $(P)$. In Section 3, we
give the  proof of Theorem~\ref{Bang}.

\section{Preliminary results}
In this section, we shall present three results that will be essential for
the proof of the bang-bang property.
The first and the second results are concerned with the existence and  energy estimates of solutions
for  linear heat equations.

\begin{Lemma}\label{Energy:0} Let $g\in L^q(0,T;L^q(\om))$
and $z_0\in C_0(\om)$. Then the equation
\begin{equation}\ll{Energy:0-1}
\left\{
\begin{array}{lll}
z_t-\Delta z=g&\mbox{in}&\Omega\times (0,T),\\
z=0&\mbox{on}&\p\om\times (0,T),\\
z(0)=z_0&\mbox{in}&\om
\end{array}
\right.
\end{equation}
has a unique solution, denoted by $z$, in
$$
L^2(0,T;H_0^1(\om))\cap W^{1,2}(0,T;H^{-1}(\om))\cap C([0,T];C_0(\om)).
$$
Moreover,
\begin{equation*}
\|z\|_{C([0,T];C(\o \om))}\leq C(T) (\|g\|_{L^q(0,T;L^q(\om))}+\|z_0\|_{C(\o \om)}).
\end{equation*}
\end{Lemma}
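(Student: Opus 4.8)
The plan is to represent the solution through the Dirichlet heat semigroup and to split the work into the elementary variational/uniqueness part and the more delicate $C_0(\om)$-continuity part. Write $S(t)=e^{t\Delta_D}$ for the analytic semigroup generated by the Dirichlet Laplacian $\Delta_D$ on $\om$. First I would record that, because $q\ge 2$ under both hypotheses on $q$, the boundedness of $\om$ gives $L^q(\om)\hookrightarrow L^2(\om)$, hence $g\in L^2(0,T;L^2(\om))\subset L^2(0,T;H^{-1}(\om))$ and $z_0\in C_0(\om)\subset L^2(\om)$. This puts us squarely in the classical variational setting.

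Existence and uniqueness in $L^2(0,T;H_0^1(\om))\cap W^{1,2}(0,T;H^{-1}(\om))$ I would obtain from the standard Galerkin/Lions--Magenes theory for linear parabolic equations: Galerkin approximation together with the energy estimate $\|z\|_{L^2(0,T;H_0^1)}+\|z_t\|_{L^2(0,T;H^{-1})}\le C(\|g\|_{L^2(0,T;L^2)}+\|z_0\|_{L^2})$ produces a weak solution, which one then identifies with the mild solution $z(t)=S(t)z_0+\int_0^t S(t-s)g(s)\,ds$. Uniqueness follows from linearity: the difference of two solutions solves the homogeneous problem with zero data, and testing with the solution itself yields $\|z(t)\|_{L^2}^2\le 0$.

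The main content is the estimate in $C([0,T];C(\o\om))$, which I would prove directly on the Duhamel formula. For the initial-datum term, the maximum principle gives the contraction $\|S(t)z_0\|_{C(\o\om)}\le\|z_0\|_{C(\o\om)}$, while strong continuity of $S(\cdot)$ on $C_0(\om)$ and the fact that $S(t)$ preserves $C_0(\om)$ place $t\mapsto S(t)z_0$ in $C([0,T];C_0(\om))$. For the source term I would invoke the ultracontractive ($L^q$--$L^\infty$) smoothing of the heat semigroup, $\|S(\tau)h\|_{L^\infty(\om)}\le C\tau^{-d/(2q)}\|h\|_{L^q(\om)}$, and estimate
\[
\Big\|\int_0^t S(t-s)g(s)\,ds\Big\|_{L^\infty}\le C\int_0^t (t-s)^{-d/(2q)}\|g(s)\|_{L^q}\,ds .
\]
Applying H\"older in $s$ with conjugate exponents $q,q'$ turns the right-hand side into $C\big(\int_0^t (t-s)^{-d/(2(q-1))}\,ds\big)^{1/q'}\|g\|_{L^q(0,T;L^q)}$, and the time integral is finite precisely when $q>1+d/2$. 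Both hypotheses on $q$ (namely $q>d$ for $d\ge2$ and $q\ge2$ for $d=1$) imply this, so the source term is bounded in $L^\infty(\om)$ uniformly on $[0,T]$ by $C(T)\|g\|_{L^q(0,T;L^q)}$. Adding the two bounds produces the asserted estimate with constant $C(T)$.

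The step I expect to be the real obstacle is upgrading these $L^\infty$ bounds to genuine membership in $C([0,T];C_0(\om))$, i.e. spatial continuity up to the boundary, vanishing there, and continuity in $t$. Interior continuity is routine parabolic smoothing, but continuity up to the merely $C^2$ boundary needs boundary parabolic regularity: I would run the $L^q$ maximal-regularity estimate to place $z$ in the anisotropic space $W^{2,1}_q(\om\times(\delta,T))$ on positive times and then use the parabolic Sobolev embedding $W^{2,1}_q\hookrightarrow C^{\alpha,\alpha/2}$, valid for $q>(d+2)/2$ --- again guaranteed by the hypotheses on $q$ --- together with the homogeneous Dirichlet condition and the $C^2$-regularity of $\p\om$ to conclude that $S(\tau)h\in C_0(\om)$ for $h\in L^q(\om)$ and $\tau>0$. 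Since the integrand $S(t-s)g(s)$ is then $C_0(\om)$-valued and the singular factor is integrable, the Bochner integral lies in $C_0(\om)$ and depends continuously on $t$, which completes the identification of $z$ as an element of $C([0,T];C_0(\om))$.
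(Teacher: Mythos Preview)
Your argument is correct, and the overall decomposition $z=S(t)z_0+\int_0^tS(t-s)g(s)\,ds$ is the same one the paper uses (there written as $z=z_1+z_2$ with $z_1$ the zero-initial-data part and $z_2$ the homogeneous part). The treatment of the homogeneous piece is identical: both you and the paper invoke that $\Delta$ generates a $C_0$ semigroup on $C_0(\om)$ (Pazy, Chapter~7). The difference lies in the inhomogeneous piece. The paper appeals once to $L^p$ maximal parabolic regularity (Lady\v{z}enskaja et al., Theorem~9.1, Chapter~4) to place $z_1$ directly in $C([0,T];W_0^{1,q}(\om))$, and then uses the Sobolev embedding $W_0^{1,q}(\om)\hookrightarrow C_0(\om)$, valid since $q>d$; this yields the $C_0$-membership and the sup-norm estimate in a single stroke. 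You instead obtain the $L^\infty$ bound from the ultracontractive smoothing $\|S(\tau)\|_{L^q\to L^\infty}\le C\tau^{-d/(2q)}$ and a H\"older-in-time argument (making the threshold $q>1+d/2$ explicit), and only afterwards call on $W^{2,1}_q$ regularity and the parabolic embedding to upgrade $L^\infty$ to $C_0(\om)$. Your approach has the virtue of exposing the precise integrability condition on $q$, but it touches the maximal-regularity machinery twice; the paper's route is more economical because one application of the $L^p$ theory plus the stationary Sobolev embedding suffices.
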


\begin{proof}~It is obvious that (\ref{Energy:0-1}) has a unique solution, denoted by $z$,
in $L^2(0,T;H_0^1(\om))\cap W^{1,2}(0,T;H^{-1}(\om))$. Moreover,
\begin{equation}\ll{Energy:0-3}
z=z_1+z_2,
\end{equation}
where $z_1$ and $z_2$ are solutions to equations
\begin{equation*}
\left\{
\begin{array}{lll}
(z_1)_t-\Delta z_1=g&\mbox{in}&\Omega\times (0,T),\\
z_1=0&\mbox{on}&\p\om\times (0,T),\\
z_1(0)=0&\mbox{in}&\om
\end{array}
\right.
\end{equation*}
and
\begin{equation*}
\left\{
\begin{array}{lll}
(z_2)_t-\Delta z_2=0&\mbox{in}&\Omega\times (0,T),\\
z_2=0&\mbox{on}&\p\om\times (0,T),\\
z_2(0)=z_0&\mbox{in}&\om,
\end{array}
\right.
\end{equation*}
respectively. On one hand, it follows from Sobolev embedding theorem and $L^p-$theory for parabolic equation
(See e.g. Theorem 9.1 of Chapter 4 in \cite{Ladyzenskaja}) that
\begin{equation}\label{Energy:0-4}
\|z_1\|_{C([0,T];C(\o \om))}\leq C(T)\|z_1\|_{C([0,T];W_0^{1,q}(\om))}
\leq C(T) \|g\|_{L^q(0,T;L^q(\om))}.
\end{equation}
On the other hand, since $\Delta$ is the infinitesimal generator of a $C_0$ semigroup
on $C_0(\om)$ (See e.g. Theorem 3.7 of Chapter 7 in \cite{Pazy}), we have that
\begin{equation}\label{Energy:0-4-1}
\|z_2\|_{C([0,T];C(\o \om))}\leq C(T)\|z_0\|_{C(\o \om)},
\end{equation}
which, combined with (\ref{Energy:0-4}) and  (\ref{Energy:0-3}), completes the proof.
\end{proof}

Based on Lemma~\ref{Energy:0}, we have the following result.
\begin{Proposition}\label{Energy:1} Let $a\in L^\infty(\om\times (0,T)), g\in L^q(0,T;L^q(\om))$
and $z_0\in C_0(\om)$. Then the equation
\begin{equation}\ll{Energy:1-1-0}
\left\{
\begin{array}{lll}
z_t-\Delta z+a z=g&\mbox{in}&\Omega\times (0,T),\\
z=0&\mbox{on}&\p\om\times (0,T),\\
z(0)=z_0&\mbox{in}&\om
\end{array}
\right.
\end{equation}
has a unique solution, denoted by $z$, in
$$
L^2(0,T;H_0^1(\om))\cap W^{1,2}(0,T;H^{-1}(\om))\cap C([0,T];C_0(\om)).
$$
Moreover,
\begin{equation*}
\|z\|_{C([0,T];C(\o \om))}\leq C(T,\|a\|_{L^\infty(\om\times (0,T))}) (\|g\|_{L^q(0,T;L^q(\om))}+\|z_0\|_{C(\o \om)}).
\end{equation*}
\end{Proposition}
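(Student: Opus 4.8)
The plan is to treat the zeroth-order term $az$ as a perturbation of the source $g$ and to construct the solution by a contraction argument on short time intervals whose length is dictated by $\|a\|_{L^\infty(\om\times(0,T))}$ alone. On an interval $[0,\tau]$ I would define the map $\Phi$ sending $z\in C([0,\tau];C_0(\om))$ to the solution $\tilde z$ of
\[
\tilde z_t-\Delta\tilde z=g-az\ \text{ in }\om\times(0,\tau),\q \tilde z=0\ \text{ on }\p\om\times(0,\tau),\q \tilde z(0)=z_0 .
\]
This $\tilde z$ is well defined by Lemma~\ref{Energy:0}, since $g-az\in L^q(0,\tau;L^q(\om))$ whenever $z\in C([0,\tau];C_0(\om))$ (because $a\in L^\infty$ and $\om$ is bounded, so $az\in L^q(0,\tau;L^q(\om))$).

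For two inputs $z,\hat z$, the difference $\tilde z-\hat{\tilde z}$ solves the same equation with source $-a(z-\hat z)$ and zero initial datum, so Lemma~\ref{Energy:0} gives
\[
\|\tilde z-\hat{\tilde z}\|_{C([0,\tau];C(\o\om))}\leq C(\tau)\,\|a(z-\hat z)\|_{L^q(0,\tau;L^q(\om))}\leq C(\tau)\,\|a\|_{L^\infty}\,|\om|^{1/q}\,\tau^{1/q}\,\|z-\hat z\|_{C([0,\tau];C(\o\om))} .
\]
Since $C(\tau)$ stays bounded as $\tau\to 0^+$ (the parabolic $L^p$ and semigroup constants in the proof of Lemma~\ref{Energy:0} are nondecreasing in the time length), I can fix $\tau=\tau(\|a\|_{L^\infty},\om,q)>0$ so small that the prefactor is $\leq\tfrac12$; then $\Phi$ is a contraction on $C([0,\tau];C_0(\om))$ and its unique fixed point is the unique solution on $[0,\tau]$. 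As this fixed point again solves the linear equation (\ref{Energy:0-1}) with right-hand side $g-az\in L^q$, Lemma~\ref{Energy:0} automatically places it in $L^2(0,\tau;H_0^1(\om))\cap W^{1,2}(0,\tau;H^{-1}(\om))\cap C([0,\tau];C_0(\om))$, which yields the asserted regularity.

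For the quantitative bound I would first absorb the perturbation on $[0,\tau]$: applying Lemma~\ref{Energy:0} directly to $z$ and estimating $\|az\|_{L^q(0,\tau;L^q(\om))}$ as above gives
\[
\|z\|_{C([0,\tau];C(\o\om))}\leq C(\tau)\bigl(\|g\|_{L^q(0,\tau;L^q(\om))}+\|z_0\|_{C(\o\om)}\bigr)+\tfrac12\|z\|_{C([0,\tau];C(\o\om))} ,
\]
so that $\|z\|_{C([0,\tau];C(\o\om))}\leq 2C(\tau)\bigl(\|g\|_{L^q(0,\tau;L^q(\om))}+\|z_0\|_{C(\o\om)}\bigr)$. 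Treating $z(\tau)$ as a new initial datum, I would then repeat the construction on $[\tau,2\tau],[2\tau,3\tau],\dots$; since $\tau$ is independent of the data, $[0,T]$ is covered in $N=\lceil T/\tau\rceil$ steps, and concatenating the local solutions produces the global solution on $[0,T]$ with the stated regularity. Iterating the displayed bound across the $N$ subintervals and controlling each $\|z(k\tau)\|_{C(\o\om)}$ by the previous step produces a final constant of the form $(2C(\tau))^N$, that is, a constant depending only on $T$ and $\|a\|_{L^\infty(\om\times(0,T))}$, which is precisely $C(T,\|a\|_{L^\infty(\om\times(0,T))})$.

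The main obstacle is that the source estimate in Lemma~\ref{Energy:0} lives in the $L^q$ space-time norm, so the perturbation $az$ cannot be absorbed in a single application over all of $[0,T]$ unless $\|a\|_{L^\infty}$ happens to be small. The device of choosing the step length $\tau$ depending only on $\|a\|_{L^\infty}$ (and $\om,q$) and then patching finitely many subintervals is exactly what converts the smallness requirement into the admissible dependence of the final constant on $T$ and $\|a\|_{L^\infty}$; verifying that $C(\tau)$ does not blow up as $\tau\to 0^+$ is the only delicate point, and it follows at once from the structure of the constants in Lemma~\ref{Energy:0}.
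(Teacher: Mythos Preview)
Your argument is correct and complete, but it takes a different route from the paper. You treat $az$ as a source perturbation and run a contraction on $C([0,\tau];C_0(\om))$ for a step length $\tau$ depending only on $\|a\|_{L^\infty}$, then iterate across $[0,T]$; the only nontrivial point is the boundedness of $C(\tau)$ as $\tau\to 0^+$, which indeed follows from the structure of the constants in Lemma~\ref{Energy:0}. The paper instead splits $z=z_1+z_2$ (zero initial data with source $g$, and initial data $z_0$ with zero source). For $z_1$ it invokes the full $L^p$ parabolic theory of \cite{Ladyzenskaja} directly, and for $z_2$ it uses an exponential weight $w(t)=e^{-\|a\|_\infty t}z_2(t)$ so that the zeroth-order coefficient becomes nonnegative, and then a comparison argument (testing against $(w-\varphi)^+$) shows $|w|\le \varphi$, where $\varphi$ solves the pure heat equation with datum $|z_0|$. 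Your approach is more elementary in that it uses Lemma~\ref{Energy:0} purely as a black box and never appeals to a maximum principle; the paper's approach avoids the subinterval iteration and yields a slightly more transparent exponential-in-$T\|a\|_\infty$ constant, and its comparison trick is later reused verbatim (with $f$ in place of $a$) in Proposition~\ref{Energy:2}.
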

\begin{proof}~It is obvious that (\ref{Energy:1-1-0}) has a unique solution, denoted by $z$,
in $L^2(0,T;H_0^1(\om))\cap W^{1,2}(0,T;H^{-1}(\om))$. Moreover,
\begin{equation}\ll{Energy:1-1-2}
z=z_1+z_2,
\end{equation}
where $z_1$ and $z_2$ are solutions to equations
\begin{equation*}
\left\{
\begin{array}{lll}
(z_1)_t-\Delta z_1+a z_1=g&\mbox{in}&\Omega\times (0,T),\\
z_1=0&\mbox{on}&\p\om\times (0,T),\\
z_1(0)=0&\mbox{in}&\om
\end{array}
\right.
\end{equation*}
and
\begin{equation}\ll{Energy:1-1-3}
\left\{
\begin{array}{lll}
(z_2)_t-\Delta z_2+a z_2=0&\mbox{in}&\Omega\times (0,T),\\
z_2=0&\mbox{on}&\p\om\times (0,T),\\
z_2(0)=z_0&\mbox{in}&\om,
\end{array}
\right.
\end{equation}
respectively.

On one hand, it follows from Sobolev embedding theorem and $L^p-$theory for parabolic equation
(See e.g. Theorem 9.1 of Chapter 4 in \cite{Ladyzenskaja}) that
\begin{equation}\label{Energy:1-2}
\|z_1\|_{C([0,T];C(\o \om))}\leq C(T)\|z_1\|_{C([0,T];W_0^{1,q}(\om))}
\leq C(T,\|a\|_{L^\infty(\om\times (0,T))}) \|g\|_{L^q(0,T;L^q(\om))}.
\end{equation}
On the other hand, we set
\begin{equation}\label{Energy:1-2-1}
w(t)\triangleq e^{-\|a\|_{L^\infty(\Omega\times (0,T))} t} z_2(t),\;\;\;\forall\;t\in [0,T].
\end{equation}
One can easily check that
\begin{equation}\label{Energy:1-3}
\left\{
\begin{array}{lll}
w_t-\Delta w+(a+\|a\|_{L^\infty(\Omega\times (0,T))}) w=0&\mbox{in}&\Omega\times (0,T),\\
w=0&\mbox{on}&\p\om\times (0,T),\\
w(0)=z_0&\mbox{in}&\om.
\end{array}
\right.
\end{equation}
Let $\varphi\in C([0,T];C_0(\om))$ be the unique solution to
\begin{equation}\label{Energy:1-4}
\left\{
\begin{array}{lll}
\varphi_t-\Delta \varphi=0&\mbox{in}&\Omega\times (0,T),\\
\varphi=0&\mbox{on}&\p\om\times (0,T),\\
\varphi(0)=|z_0|&\mbox{in}&\om.
\end{array}
\right.
\end{equation}
Then
\begin{equation}\label{Energy:1-4-1}
\varphi(x,t)\geq 0\;\;\mbox{for each}\;\;(x,t)\in \om\times (0,T),
\end{equation}
and by the same argument for (\ref{Energy:0-4-1}), we have that
\begin{equation}\label{Energy:1-5}
\|\varphi\|_{C([0,T];C(\o \om))}\leq C(T)\|z_0\|_{C(\o \om)}.
\end{equation}
By (\ref{Energy:1-3}) and (\ref{Energy:1-4}), we get that
\begin{equation}\label{Energy:1-6}
\left\{
\begin{array}{lll}
(w-\varphi)_t-\Delta (w-\varphi)+(a+\|a\|_{L^\infty(\Omega\times (0,T))})w=0&\mbox{in}&\Omega\times (0,T),\\
w-\varphi=0&\mbox{on}&\p\om\times (0,T),\\
(w-\varphi)(0)\leq 0&\mbox{in}&\om.
\end{array}
\right.
\end{equation}
Multiplying the first equation of (\ref{Energy:1-6}) by $(w-\varphi)^+$ and integrating it over $\om$, after
some simple calculations, we obtain that
\begin{equation*}
\frac{1}{2}\frac{d}{dt}\|(w-\varphi)^+\|_{L^2(\om)}^2
+\int_\om (a+\|a\|_{L^\infty(\Omega\times (0,T))})w(w-\varphi)^+\,dx\leq 0,\;\;
\mbox{a.e.}\;t\in (0,T).
\end{equation*}
Integrating the latter inequality over $(0,t)$ and by (\ref{Energy:1-4-1}), we have that
\begin{equation*}
\|(w-\varphi)^+(t)\|_{L^2(\om)}\leq 0\;\;\mbox{for}\;t\in (0,T),
\end{equation*}
which implies
\begin{equation*}
w(x,t)\leq \varphi(x,t)\;\;\mbox{for a.e.}\; (x,t)\in \om\times (0,T).
\end{equation*}
Similarly, $w(x,t)\geq -\varphi(x,t)$ for a.e. $(x,t)\in \om\times (0,T)$. Hence
\begin{equation}\label{Energy:1-7}
|w(x,t)|\leq \varphi(x,t)\;\;\mbox{for a.e.}\; (x,t)\in \om\times (0,T).
\end{equation}
This together with (\ref{Energy:1-2-1}) and (\ref{Energy:1-5}) indicates
\begin{equation*}
\|z_2\|_{L^\infty(\Omega\times (0,T))}\leq C(T,\|a\|_{L^\infty(\Omega\times (0,T))})\|z_0\|_{C(\o \om)},
\end{equation*}
which, combined with (\ref{Energy:1-1-3}) and Lemma~\ref{Energy:0}, implies
\begin{equation*}
\|z_2\|_{C([0,T];C(\o \om))}\leq C(T,\|a\|_{L^\infty(\Omega\times (0,T))})\|z_0\|_{C(\o \om)}.
\end{equation*}
By the latter inequality, (\ref{Energy:1-2}) and (\ref{Energy:1-1-2}), we complete
the proof.
\end{proof}

The next result is about the existence and energy estimates of solutions for semilinear heat equations.

\begin{Proposition}\label{Energy:2} Let  $g\in L^q(0,T;L^q(\om))$
and $z_0\in C_0(\om)$. The equation
\begin{equation}\ll{Energy:2-1}
\left\{
\begin{array}{lll}
z_t-\Delta z+f(z)=g&\mbox{in}&\Omega\times (0,T),\\
z=0&\mbox{on}&\p\om\times (0,T),\\
z(0)=z_0&\mbox{in}&\om
\end{array}
\right.
\end{equation}
has a unique solution, denoted by $z$, in $C([0,T];C_0(\om))$. Moreover,
\begin{equation}\ll{Energy:2-1-1}
\|z\|_{C([0,T];C(\o \om))}\leq C(T) (\|g\|_{L^q(0,T;L^q(\om))}+\|z_0\|_{C(\o \om)}).
\end{equation}
\end{Proposition}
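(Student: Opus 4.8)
The plan is to combine a fixed-point construction for a truncated equation with an a priori $L^\infty$ bound coming from the sign condition $(H_2)$; the truncation reduces matters to the globally Lipschitz case, while the sign condition both prevents blow-up and yields the estimate (\ref{Energy:2-1-1}). First I would reduce to a globally Lipschitz nonlinearity. For each $K>0$, define the truncation $f_K$ by $f_K(y)=f(y)$ for $|y|\leq K$ and $f_K(y)=f(\mathrm{sgn}(y)K)$ for $|y|>K$. By $(H_1)$ this $f_K$ is globally Lipschitz with constant $L_K$ (the Lipschitz constant of $f$ on $[-K,K]$) and bounded, and it still satisfies $f_K(y)y\geq 0$, since $(H_2)$ forces $f(\pm K)$ to have the sign of $\pm K$.

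I would then solve
$$z_t-\Delta z+f_K(z)=g$$
(with the same boundary and initial data) by a contraction argument. Given $\bar z$, let $\Phi(\bar z)$ be the solution of the linear problem $z_t-\Delta z=g-f_K(\bar z)$ furnished by Lemma~\ref{Energy:0}, noting that $f_K(\bar z)$ is bounded and hence lies in $L^q(0,T;L^q(\om))$. Using the estimate of Lemma~\ref{Energy:0} together with $\|f_K(\bar z_1)-f_K(\bar z_2)\|_{L^q(0,\tau;L^q(\om))}\leq L_K\,\tau^{1/q}|\om|^{1/q}\|\bar z_1-\bar z_2\|_{C([0,\tau];C(\o\om))}$, the map $\Phi$ is a contraction on $C([0,\tau];C_0(\om))$ for $\tau$ small depending only on $L_K$; iterating over $[0,T]$ produces a unique solution $z_K\in C([0,T];C_0(\om))$ of the truncated equation.

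The crucial step is an a priori $L^\infty$ bound on $z_K$ that is independent of $K$. Let $\psi$ solve the linear heat equation with source $|g|$ and initial datum $|z_0|$; by the maximum principle $\psi\geq 0$, and Lemma~\ref{Energy:0} gives $\|\psi\|_{C([0,T];C(\o\om))}\leq C(T)(\|g\|_{L^q(0,T;L^q(\om))}+\|z_0\|_{C(\o\om)})=:K_0$. I would then prove $|z_K|\leq\psi$ by the comparison technique already used in Proposition~\ref{Energy:1}: testing the equation for $z_K-\psi$ against $(z_K-\psi)^+$, one observes that on the set $\{z_K>\psi\geq 0\}$ one has $z_K>0$, whence $f_K(z_K)\geq 0$ by $(H_2)$, so the nonlinear term contributes a nonnegative quantity and $\|(z_K-\psi)^+(t)\|_{L^2(\om)}^2$ is non-increasing from its zero initial value; the symmetric argument applied to $-(z_K+\psi)$ gives $z_K\geq-\psi$. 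This yields $\|z_K\|_{L^\infty(\om\times(0,T))}\leq\|\psi\|_{C([0,T];C(\o\om))}=K_0$, a bound independent of $K$.

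Finally I would close the argument. Choosing $K=K_0$ gives $|z_{K_0}|\leq K_0$, so $f_{K_0}(z_{K_0})=f(z_{K_0})$ and $z:=z_{K_0}$ solves (\ref{Energy:2-1}); the bound $|z|\leq\psi$, being pointwise for continuous functions, then gives exactly (\ref{Energy:2-1-1}). For uniqueness, if $z,\tilde z$ are two solutions in $C([0,T];C_0(\om))$ they are bounded by some $R$, so $w=z-\tilde z$ satisfies $w_t-\Delta w+bw=0$ with $b=(f(z)-f(\tilde z))/(z-\tilde z)\in L^\infty(\om\times(0,T))$ (bounded by the Lipschitz constant of $f$ on $[-R,R]$) and $w(0)=0$, whence Proposition~\ref{Energy:1} forces $w=0$. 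The main obstacle is the comparison step establishing the $K$-independent bound $|z_K|\leq\psi$, since this is precisely where the sign condition $(H_2)$ must be exploited to control the nonlinearity and thereby upgrade local to global solvability.
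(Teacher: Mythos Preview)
Your proposal is correct and follows essentially the same strategy as the paper: truncate $f$ to a globally Lipschitz nonlinearity, solve the truncated problem via a fixed-point/iteration argument based on Lemma~\ref{Energy:0}, and then use the sign condition $(H_2)$ together with the comparison function $\psi$ (solving $\psi_t-\Delta\psi=|g|$, $\psi(0)=|z_0|$) to obtain the $K$-independent a~priori bound $|z_K|\leq\psi$, which yields both global existence and the estimate~(\ref{Energy:2-1-1}); uniqueness follows from the local Lipschitz property just as in the paper. The only organizational difference is that the paper argues via a maximal interval of existence and rules out blow-up with the same comparison bound, whereas you fix the truncation level to $K_0=\|\psi\|_{C([0,T];C(\o\om))}$ directly---this sidesteps the blow-up alternative, but the substantive ideas (truncation, comparison via $(H_2)$, Lemma~\ref{Energy:0}) are identical.
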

\begin{proof}~The proof of this proposition is divided into three steps.\\

Step 1. To show the uniquness\\

\noindent Suppose that $z_1\in C([0,T];C_0(\om))$ and $z_2\in C([0,T];C_0(\om))$ are two solutions of (\ref{Energy:2-1}) on $[0,T]$. Then
\begin{equation}\ll{Energy:2-1-2}
z_1(t)-z_2(t)=\int_0^t S(t-s) (f(z_2(s))-f(z_1(s)))\,ds,\;\;\;\forall\;t\in [0,T].
\end{equation}
Here and throughout the proof of this proposition, $S(\cdot)$ is the $C_0$ semigroup generated by 
Dirichlet-Laplacian on $L^2(\om)$.  Denote
\begin{equation*}
K\triangleq \max\{\|z_1\|_{C([0,T];C(\o \om))}, \|z_2\|_{C([0,T];C(\o \om))}\}+1,
\end{equation*}
and let $L>0$ be the Lipschitz constant of $f$ on $[-K,K]$. Then it follows from (\ref{Energy:2-1-2}) that
\begin{eqnarray*}
\|(z_1-z_2)(t)\|_{C(\o \om)}&=&\left\|\d{\int_0^t} S(t-s) (f(z_1(s))-f(z_2(s)))\,ds\right\|_{C(\o \om)}\\
&\leq&\d{\int_0^t} \|f(z_1(s))-f(z_2(s))\|_{C(\o \om)}\,ds\\
&\leq&L\d{\int_0^t} \|z_1(s)-z_2(s)\|_{C(\o \om)}\,ds,\;\;\;\forall\;t\in [0,T],
\end{eqnarray*}
which, combined with Gronwall's inequality, indicates the uniqueness.\\

Step 2. To prove the existence\\

By Lemma~\ref{Energy:0}, we have that
\begin{equation*}
\int_0^t S(t-s)g(s)\,ds\in C([0,T];C_0(\om)).
\end{equation*}
Let  $\widetilde{K}\triangleq\|z_0\|_{C(\o \om)}+1+\left\|\d{\int_0^t} S(t-s)g(s)\,ds\right\|_{C([0,T];C(\o \om))}$
and let
  $\widetilde{f}:\mathbb{R}\rightarrow \mathbb{R}$ be defined by:
\begin{equation}\ll{Energy:2-4}
\widetilde{f}(r)\triangleq\left\{
\begin{array}{lll}
f(\widetilde{K})&\mbox{if}&r>\widetilde{K},\\
f(r)&\mbox{if}&|r|\leq \widetilde{K},\\
f(-\widetilde{K})&\mbox{if}&r<-\widetilde{K}.
\end{array}\right.
\end{equation}
Then the function $\widetilde{f}$ is globally Lipschitz. By Lemma~\ref{Energy:0}, the following equation
\begin{equation}\ll{Energy:2-5}
\left\{
\begin{array}{lll}
\widetilde{w}_t-\Delta \widetilde{w}+\widetilde{f}(\widetilde{w})=g&\mbox{in}&\Omega\times (0,T),\\
\widetilde{w}=0&\mbox{on}&\p\om\times (0,T),\\
\widetilde{w}(0)=z_0&\mbox{in}&\om
\end{array}
\right.
\end{equation}
has a unique solution $\widetilde{w}\in C([0,T];C_0(\om))$. Moreover,
\begin{equation*}
\widetilde{w}(t)=S(t)z_0+\int_0^t S(t-s)(g(s)-\widetilde{f}(\widetilde{w}(s)))\,ds,\;\;\;\forall\;t\in [0,T].
\end{equation*}
Then
\begin{eqnarray*}
&&\|\widetilde{w}(t)\|_{C(\o \om)}\\
&\leq&\|S(t)z_0\|_{C(\o \om)}+\left\|\d{\int_0^t} S(t-s) g(s)\,ds\right\|_{C([0,T];C(\o \om))}
+\d{\int_0^t} \left\|\widetilde{f}(\widetilde{w}(s))\right\|_{C(\o \om)}\,ds\\
&\leq&\|z_0\|_{C(\o \om)}+\left\|\d{\int_0^t} S(t-s) g(s)\,ds\right\|_{C([0,T];C(\o \om))}
+t\|f\|_{C\left(\left[-\widetilde{K},\widetilde{K}\right]\right)},\;\;\;\forall\;t\in [0,T].
\end{eqnarray*}
Set $T_1=\min\left\{T,\|f\|_{C\left(\left[-\widetilde{K},\widetilde{K}\right]\right)}^{-1}\right\}$. From the latter it follows that
\begin{equation*}
\|\widetilde{w}\|_{C([0,T_1];C(\o \om))}\leq \widetilde{K},
\end{equation*}
which, combined with (\ref{Energy:2-4}) and (\ref{Energy:2-5}), indicates
the equation
\begin{equation}\ll{Energy:2-3}
\left\{
\begin{array}{lll}
\widetilde{z}_t-\Delta \widetilde{z}+f(\widetilde{z})=g&\mbox{in}&\Omega\times (0,T),\\
\widetilde{z}=0&\mbox{on}&\p\om\times (0,T),\\
\widetilde{z}(0)=z_0&\mbox{in}&\om.
\end{array}
\right.
\end{equation}
has a solution in $C([0,T_1];C_0(\om))$. Then by iterating this construction and
the uniqueness as Step 1, we obtain the existence of a solution $\widetilde{z}$ on $[0,T]$ or on a maximal time interval
$[0,T_{\mbox{max}})$ with $T_{\mbox{max}}\leq T$.\\

Case 1. $\widetilde{z}\in C([0,T];C_0(\om))$. In this case,  the proof is finished.\\

Case 2. $\widetilde{z}\in C([0,T_{\mbox{max}});C_0(\om))$. We claim that this case cannot happen.\\
By contradiction, on one hand, since $[0,T_{\mbox{max}})$ is the maximal time interval for the solution of
(\ref{Energy:2-3}) in $C([0,T_{\mbox{max}});C_0(\om))$, we have
\begin{equation}\label{Energy:2-7}
\limsup\limits_{t\uparrow T_{\mbox{max}}} \|\widetilde{z}(t)\|_{C(\o \om)}=+\infty.
\end{equation}
On the other hand, for any $\widetilde{T}\in (0,T_{\mbox{max}})$, we consider the following two equations:
\begin{equation}\ll{Energy:2-8}
\left\{
\begin{array}{lll}
\widetilde{z}_t-\Delta \widetilde{z}+f(\widetilde{z})=g&\mbox{in}&\Omega\times (0,\widetilde{T}),\\
\widetilde{z}=0&\mbox{on}&\p\om\times (0,\widetilde{T}),\\
\widetilde{z}(0)=z_0&\mbox{in}&\om
\end{array}
\right.
\end{equation}
and
\begin{equation}\ll{Energy:2-9}
\left\{
\begin{array}{lll}
\psi_t-\Delta \psi=|g|&\mbox{in}&\Omega\times (0,T),\\
\psi=0&\mbox{on}&\p\om\times (0,T),\\
\psi(0)=|z_0|&\mbox{in}&\om.
\end{array}
\right.
\end{equation}
It follows from (\ref{Energy:2-8}) and (\ref{Energy:2-9}) that
\begin{equation}\ll{Energy:2-10}
\left\{
\begin{array}{lll}
(\widetilde{z}-\psi)_t-\Delta (\widetilde{z}-\psi)+f(\widetilde{z})\leq 0&\mbox{in}&\Omega\times (0,\widetilde{T}),\\
\widetilde{z}-\psi=0&\mbox{on}&\p\om\times (0,\widetilde{T}),\\
(\widetilde{z}-\psi)(0)\leq 0&\mbox{in}&\om.
\end{array}
\right.
\end{equation}
Multiplying the first equation of (\ref{Energy:2-10}) by $(\widetilde{z}-\psi)^+$
and integrating it over $\om$, by $(H_2)$ and the similar arguments for (\ref{Energy:1-7}), we have that
\begin{equation}\label{Energy:2-10-1}
|\widetilde{z}(x,t)|\leq \psi(x,t)\;\;\mbox{a.e. in}\;(x,t)\in \om\times (0,\widetilde{T}),
\end{equation}
which, combined with (\ref{Energy:2-9}) and Lemma~\ref{Energy:0}, indicates
\begin{equation}\label{Energy:2-11}
\begin{array}{lll}
\|\widetilde{z}\|_{C\left(\left[0,\widetilde{T}\right];C(\o \om)\right)}
&\leq&\|\psi\|_{C\left(\left[0,\widetilde{T}\right];C(\o \om)\right)}\leq \|\psi\|_{C([0,T];C(\o \om))}\\
&\leq&C(T)(\|g\|_{L^q(0,T;L^q(\om))}+\|z_0\|_{C(\o \om)}).
\end{array}
\end{equation}
This contradicts (\ref{Energy:2-7}).\\

Step 3. To show (\ref{Energy:2-1-1})\\

\noindent Consider the following equation
\begin{equation*}
\left\{
\begin{array}{lll}
\phi_t-\Delta \phi=|g|&\mbox{in}&\Omega\times (0,T),\\
\phi=0&\mbox{on}&\p\om\times (0,T),\\
\phi(0)=|z_0|&\mbox{in}&\om.
\end{array}
\right.
\end{equation*}
By the similar arguments led to (\ref{Energy:2-11}), we have
\begin{equation*}
\|z\|_{C([0,T];C(\o \om))}\leq \|\phi\|_{C([0,T];C(\o \om))}
\leq C(T)(\|g\|_{L^q(0,T;L^q(\om))}+\|z_0\|_{C(\o \om)}).
\end{equation*}

This completes the proof.
\end{proof}

\section{Proof of Theorem~\ref{Bang}}
 In this section, we shall first present the following local null controllability of a semilinear heat equation.
\begin{Proposition}\label{Control} Let $E\subset (0,T)$ with $|E|>0$. For any $\phi\in L^\infty(\om\times (0,T))$,
there are two positive constants $\rho_0\triangleq \rho_0(E,T,f,\|\phi\|_{L^\infty(\om\times (0,T))})$
and $\kappa\triangleq \kappa(E,T,f,\|\phi\|_{L^\infty(\om\times (0,T))})$, so that
for any $w_0\in C_0(\om)$ with $\|w_0\|_{C(\o \om)}\leq \rho_0$,
there exists
a function $v\in L^\infty(\om\times (0,T))$ so that
\begin{equation*}
\|v\|_{L^\infty(\om\times (0,T))}\leq \kappa \|w_0\|_{L^2(\om)},
\end{equation*}
and $w\in C([0,T];C_0(\om))$ satisfies
\begin{equation*}
\left\{
\begin{array}{lll}
w_t-\Delta w+f(\phi+w)-f(\phi)=\chi_\omega \chi_E v&\mbox{in}&\om\times (0,T),\\
w=0&\mbox{on}&\partial\om\times (0,T),\\
w(0)=w_0&\mbox{in}&\om,\\
w(T)=0&\mbox{in}&\om.
\end{array}\right.
\end{equation*}
\end{Proposition}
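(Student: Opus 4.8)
The plan is to strip off the nonlinearity by freezing it into a bounded potential, to solve the resulting \emph{linear} null-control problem with an $L^\infty$ control supplied by the observability estimate of Lemma~\ref{eqn:4}, and finally to recover the genuine nonlinear term through a set-valued fixed point argument (Kakutani--Fan--Glicksberg). Set $N=\|\phi\|_{L^\infty(\om\times(0,T))}$, fix a radius $R>0$, and for $\zeta\in C([0,T];C_0(\om))$ with $\|\zeta\|_{C([0,T];C(\o\om))}\le R$ put
\[
a_\zeta(x,t)=\begin{cases}\dfrac{f(\phi+\zeta)-f(\phi)}{\zeta}, & \zeta(x,t)\ne0,\\[2pt] 0,& \zeta(x,t)=0,\end{cases}
\]
so that $a_\zeta\,\zeta=f(\phi+\zeta)-f(\phi)$ pointwise. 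By $(H_1)$ the Lipschitz constant $L=L(N+R,f)$ of $f$ on $[-(N+R),N+R]$ gives $\|a_\zeta\|_{L^\infty(\om\times(0,T))}\le L$, and a fixed point $w=\zeta$ of the map built below will automatically solve $w_t-\Delta w+(f(\phi+w)-f(\phi))=\chi_\omega\chi_E v$, which is the target equation.

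For a frozen potential $a$ with $\|a\|_\infty\le L$ I consider $w_t-\Delta w+aw=\chi_\omega\chi_E v$, $w(0)=w_0$, $w(T)=0$. Its adjoint $-\varphi_t-\Delta\varphi+a\varphi=0$ is, after reversing time, of the form covered by Lemma~\ref{eqn:4}, so $\|\varphi(0)\|_{L^2(\om)}\le C\int_{\omega\times E}|\varphi|\,dx\,dt$ with $C=C(\om,\omega,E)\exp(C(\om,\omega)(1+TL+L^{2/3}))$. Minimizing over the terminal datum $\eta$ the convex functional $J(\eta)=\tfrac12\big(\int_{\omega\times E}|\varphi_\eta|\big)^2+\int_\om w_0\,\varphi_\eta(0)$, where $\varphi_\eta$ solves the adjoint with $\varphi_\eta(T)=\eta$, produces through the Euler--Lagrange relation a control $v$ supported in $\omega\times E$ with $\|v\|_{L^\infty(\omega\times E)}\le\int_{\omega\times E}|\hat\varphi|$; and from $J(\hat\eta)\le J(0)=0$ one reads off $\int_{\omega\times E}|\hat\varphi|\le 2C\|w_0\|_{L^2(\om)}$, hence $\|v\|_{L^\infty}\le\kappa\|w_0\|_{L^2(\om)}$ with $\kappa=2C$. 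Feeding $\chi_\omega\chi_E v\in L^q(0,T;L^q(\om))$ and $w_0$ into Proposition~\ref{Energy:1} bounds $\|w\|_{C([0,T];C(\o\om))}\le C(T,L)(\kappa\|w_0\|_{L^2}+\|w_0\|_{C(\o\om)})\le C_1\|w_0\|_{C(\o\om)}$, and a parallel parabolic energy estimate controls $\|w\|_{L^2(0,T;H^1_0)}+\|w_t\|_{L^2(0,T;H^{-1})}$; choosing $\rho_0$ so small that these are $\le R$ keeps the state inside the prescribed balls.

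For the fixed point I let $\mathcal K$ be the set of $\zeta$ bounded by $R$ in $C([0,T];C(\o\om))$ and by a fixed constant in $L^2(0,T;H^1_0(\om))\cap H^1(0,T;H^{-1}(\om))$, which is convex and, by the Aubin--Lions lemma, compact in $L^2(\om\times(0,T))$. For $\zeta\in\mathcal K$ I define $\Lambda(\zeta)$ to be the set of states $w$ produced above as $v$ ranges over all admissible controls (those with $\|v\|_\infty\le\kappa\|w_0\|_{L^2}$ steering $w_0$ to $0$ for the potential $a_\zeta$). Then $\Lambda(\zeta)$ is nonempty by the linear controllability just established, convex because the admissible controls form a convex set and $v\mapsto w$ is affine for fixed $a_\zeta$, and contained in $\mathcal K$ by the estimates. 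Kakutani's theorem then furnishes $w\in\Lambda(w)$, i.e.\ a solution of the nonlinear system in $C([0,T];C_0(\om))$ with $\|v\|_{L^\infty}\le\kappa\|w_0\|_{L^2(\om)}$, which is the assertion.

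The hard part will be the closed-graph (upper semicontinuity) of $\Lambda$, and this is exactly where the mere local Lipschitz regularity of $f$ bites. If $\zeta_n\to\zeta$ in $L^2$ with $w_n\in\Lambda(\zeta_n)$ and $w_n\to w$, then $a_{\zeta_n}\to a_\zeta$ a.e.\ only on $\{\zeta\ne0\}$, while on $\{\zeta=0\}$ the difference quotients converge merely weakly-$*$ to some $b$ with $\|b\|_\infty\le L$. Using the strong $L^2$ compactness of $w_n$ one passes to the limit $a_{\zeta_n}w_n\rightharpoonup b\,w$ and $v_n\overset{*}{\rightharpoonup}v$, obtaining a solution whose potential agrees with $a_\zeta$ off $\{\zeta=0\}$; since $b$ and $a_\zeta$ differ only where $\zeta=0$, and there the nonlinear term $f(\phi+\zeta)-f(\phi)$ vanishes, the limit still represents an admissible element of the map, and at a genuine fixed point $w=\zeta$ the ambiguity is harmless because $w\equiv0$ wherever $\zeta=0$. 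Making this limit passage rigorous — equivalently, circumventing the discontinuity of $\zeta\mapsto a_\zeta$ at the zero set, for instance by first smoothing $f$ and passing to the limit under the uniform bound $\|v\|_\infty\le\kappa\|w_0\|_{L^2}$ — together with the existence of the minimizer of $J$ (which is coercive only in the observation seminorm $\int_{\omega\times E}|\varphi|$, so one works in its completion, a norm by backward uniqueness), is where the real work concentrates; the terminal constraint $w_n(T)=0$ itself survives because the uniform energy bounds force $w_n(T)\to w(T)$ in $L^2(\om)$.
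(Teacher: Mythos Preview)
Your overall architecture---freeze the nonlinearity into a bounded potential, obtain $L^\infty$ null controllability of the linearized equation from the observability estimate of Lemma~\ref{eqn:4}, and close via a Kakutani-type set-valued fixed point---is exactly the paper's strategy, and your construction of $v$ and the estimate $\|v\|_{L^\infty}\le\kappa\|w_0\|_{L^2}$ are on target.

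The one genuine gap is precisely where you hedge: the closed-graph (upper semicontinuity) of $\Lambda$. Your interim argument that ``the limit still represents an admissible element of the map'' because $b$ and $a_\zeta$ differ only on $\{\zeta=0\}$ is not correct as stated: the frozen equation contains $a_\zeta w$, not $a_\zeta\zeta$, so on $\{\zeta=0\}$ one has $a_\zeta w=0$ whereas $bw$ need not vanish, and the limit $w$ therefore need not lie in $\Lambda(\zeta)$. The fix you then propose---smooth $f$ first---is exactly what the paper does, and it eliminates the difficulty cleanly rather than by arguing around it: the paper begins by reducing, via a density argument, to $f\in C^1$, and then defines the frozen potential by
\[
a(x,t,r)=\begin{cases}\dfrac{f(\phi(x,t)+r)-f(\phi(x,t))}{r},& r\ne0,\\ f'(\phi(x,t)),& r=0,\end{cases}
\]
so that $r\mapsto a(x,t,r)$ is continuous. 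With this choice, $\xi_m\to\xi$ a.e.\ forces $a(\cdot,\cdot,\xi_m)\to a(\cdot,\cdot,\xi)$ pointwise a.e.\ on the whole of $\om\times(0,T)$ (including $\{\xi=0\}$), and then dominated convergence plus the strong $L^2$ compactness of the states makes the closed-graph verification routine. In short: your roadmap is the paper's roadmap, but the paper's $C^1$ reduction and the choice $a(x,t,0)=f'(\phi(x,t))$ is the actual mechanism that makes Step~3 go through, not the ``$\{\zeta=0\}$ is harmless'' heuristic.
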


\begin{proof}~By a classical density argument,
we may assume that $f\in C^1$. We shall use the Kakutani's Fixed Point Theorem to prove it.
To this end, for any $(x,t)\in \om\times (0,T)$, we define
\begin{equation*}
a(x,t,r)\triangleq\left\{
\begin{array}{lll}
\frac{f(\phi(x,t)+r)-f(\phi(x,t))}{r}&\mbox{if}&r\not=0,\\
f^\prime(\phi(x,t))&\mbox{if}&r=0.
\end{array}\right.
\end{equation*}
Set
\begin{equation*}
\mathcal{K}\triangleq\{\xi\in L^2(0,T;L^2(\Omega)):\;\|\xi\|_{L^2(0,T;H_0^1(\Omega))\cap W^{1,2}(0,T;H^{-1}(\om))}
+\|\xi\|_{L^\infty(\om\times (0,T))}\leq 1\}.
\end{equation*}
For each $\xi\in \mathcal{K}$, consider the linear control system
\begin{equation}\label{Control:1}
\left\{
\begin{array}{lll}
w_t-\Delta w+a(x,t,\xi(x,t))w=\chi_\omega \chi_E v&\mbox{in}&\Omega\times (0,T),\\
w=0&\mbox{on}&\partial\Omega\times (0,T),\\
w(0)=w_0&\mbox{in}&\Omega.
\end{array}\right.
\end{equation}
By $(H_1)$, we have that
\begin{equation}\label{Control:2}
|a(x,t,\xi(x,t))|\leq C_1\;\;\mbox{a.e.}\; (x,t)\in \om\times (0,T),
\end{equation}
where
\begin{equation*}
C_1\triangleq C_1(f,\|\phi\|_{L^\infty(\om\times (0,T))})
\end{equation*}
denotes the Lipschitz constant of $f$ on $[-\|\phi\|_{L^\infty(\om\times (0,T))}-1,\|\phi\|_{L^\infty(\om\times (0,T))}+1]$.
By Lemma~\ref{eqn:4}, (\ref{Control:1}) and (\ref{Control:2}), there exists
$$
\kappa\triangleq \kappa(E,T,f,\|\phi\|_{L^\infty(\om\times (0,T))})>0\;\;\mbox{
and}\;\;v\in L^\infty(\om\times (0,T))
$$
with
\begin{equation}\label{Control:2-1}
\|v\|_{L^\infty(\om\times (0,T))}\leq \kappa\|w_0\|_{L^2(\om)},
\end{equation}
so  that
\begin{equation}\label{Control:3}
w(T)=0.
\end{equation}

Now, we define the multivalued  map $\Phi: \mathcal{K}\rightarrow L^2(0,T;L^2(\Omega))$ by
\begin{equation*}
\Phi(\xi)\triangleq\{w:\;\mbox{there exists a control}\;\;v
\;\;\mbox{so that (\ref{Control:1}), (\ref{Control:2-1}) and (\ref{Control:3}) hold}\},\;\;\mbox{for}\;\;\xi\in \mathcal{K}.
\end{equation*}
From the above arguments it follows  that $\Phi(\xi)\not=\emptyset$ for each $\xi\in \mathcal{K}$.\\

Next we  check in three steps the conditions of Kakutani's fixed point
theorem.\\

\noindent Step 1. To show that $\mathcal{K}$ is a convex and
compact set in $L^2(0,T;L^2(\om))$ and $\Phi(\xi)$ is
a convex set in $L^2(0,T;L^2(\om))$ for each $\xi\in
\mathcal{K}$\\

These can be directly checked.\\

\noindent{\it Step 2. To show that $\Phi(\mathcal{K})\subset
\mathcal{K}$}\\

To achieve this goal, we observe that for every $\xi\in
\mathcal{K}$, there is   $v\in
L^\infty(\om\times (0,T))$, with the estimate
\begin{equation}\label{Control:5}
\|v\|_{L^\infty(\om\times (0,T))}\leq
\kappa\|w_0\|_{L^2(\Omega)},
\end{equation}
so that the associated state $w=w(x,t)$ satisfies
\begin{equation}\label{Control:6}
\left\{
\begin{array}{lll}
w_t-\Delta w+a(x,t,\xi(x,t))w=\chi_\omega \chi_E v&\mbox{in}&\Omega\times (0,T),\\
w=0&\mbox{on}&\partial\Omega\times (0,T),\\
w(0)=w_0&\mbox{in}&\Omega,\\
w(T)=0&\mbox{in}&\Omega.
\end{array}\right.
\end{equation}
The standard energy method, the fact that $|a|\leq C_1$ and Proposition~\ref{Energy:1}, as
well as  (\ref{Control:5}) and (\ref{Control:6}), lead to
\begin{equation*}
\|w\|_{L^2(0,T;H_0^1(\Omega))\cap
W^{1,2}(0,T;H^{-1}(\Omega))}+\|w\|_{C([0,T];C(\o \om))}\leq
C_2\|w_0\|_{C(\o \om)},
\end{equation*}
for some positive constant $C_2\triangleq C_2(E,T,f,\|\phi\|_{L^\infty(\om\times (0,T))})$.
Hence, if
$$
\|w_0\|_{C(\o \om)}\leq\rho_0\triangleq C_2^{-1},
$$
then $\Phi(\mathcal{K})\subset\mathcal{K}$.\\

\noindent{\it Step 3. To show that  $\Phi$ is upper semicontinuous
in $L^2(0,T;L^2(\Omega))$}\\

It suffices to show that if
$$
\xi_m\in \mathcal{K}\rightarrow \xi\;\;\mbox{ strongly
in}\;\;L^2(0,T;L^2(\Omega))
$$
and
$$
w_m\in
\Phi(\xi_m)\rightarrow w\;\;\mbox{ strongly
in}\;\;L^2(0,T;L^2(\Omega)),
$$
then $w\in
 \Phi(\xi)$.
 To this end,  we first observe that $\xi\in \mathcal{K}$. Next we
 claim that there exists a subsequence of
$\{m\}_{m\geq 1}$, still denoted in the same manner, so that
\begin{equation}\label{Control:7}
a(x,t,\xi_m)w_m\rightarrow a(x,t,\xi)w\;\;\mbox{strongly in}\;\;L^2(0,T;L^2(\Omega)).
\end{equation}
Indeed, since $\xi_m\rightarrow \xi$ strongly in $L^2(0,T;L^2(\Omega))$, we have  a subsequence of $\{m\}_{m\geq 1}$,
still denoted by itself, so that
\begin{equation*}
\xi_m(x,t)\rightarrow \xi(x,t)\;\;\mbox{for a.e.}\;\;(x,t)\in \Omega\times (0,T).
\end{equation*}
On one hand, for $(x,t)$ with $\xi(x,t)\not=0$, by the above, there exists a positive integer $m_0$ depending on $(x,t)$ so that
\begin{equation*}
\xi_m(x,t)\not=0\;\;\;\forall\;m\geq m_0,
\end{equation*}
which, combined with the definition of $a$, implies that
\begin{equation}\label{Control:8}
a(x,t,\xi_m(x,t))\rightarrow a(x,t,\xi(x,t))\;\;\mbox{as}\;\;m\rightarrow +\infty.
\end{equation}
On the other hand, for any $(x,t)$ satisfying $\xi(x,t)=0$, by the definition of $a$,
we have that $a(x,t,\xi(x,t))=f^\prime(\phi(x,t))$. Since
\begin{equation*}
a(x,t,\xi_m(x,t))=\left\{
\begin{array}{lll}
\frac{f(\phi(x,t)+\xi_m(x,t))-f(\phi(x,t))}{\xi_m(x,t)}&\mbox{if}&\xi_m(x,t)\not=0,\\
f^\prime(\phi(x,t))&\mbox{if}&\xi_m(x,t)=0,
\end{array}\right.
\end{equation*}
it holds that
\begin{equation*}
a(x,t,\xi_m(x,t))\rightarrow a(x,t,\xi(x,t))\;\;\mbox{as}\;\;m\rightarrow +\infty.
\end{equation*}
This, combined with (\ref{Control:8}), implies
\begin{equation*}
a(x,t,\xi_m(x,t))\rightarrow a(x,t,\xi(x,t))\;\;\mbox{for a.e.}\;\;(x,t)\in \Omega\times (0,T).
\end{equation*}
From the latter, the fact that $|a(x,t,\xi_m(x,t))|\leq C_1$ and  Lebesgue's
dominated convergence theorem, it follows that
\begin{eqnarray*}
&&\|a(x,t,\xi_m)w_m-a(x,t,\xi)w\|_{L^2(0,T;L^2(\Omega))}^2\\
&\leq&2\|a(x,t,\xi_m)(w_m-w)\|_{L^2(0,T;L^2(\Omega))}^2+2\|(a(x,t,\xi_m)-a(x,t,\xi))w\|_{L^2(0,T;L^2(\Omega))}^2\\
&\leq&2 C_1^2\|w_m-w\|_{L^2(0,T;L^2(\Omega))}^2+2\|(a(x,t,\xi_m)-a(x,t,\xi))w\|_{L^2(0,T;L^2(\Omega))}^2\\
&\rightarrow&0.
\end{eqnarray*}
This leads to  (\ref{Control:7}).

Finally,  since $w_m\in \Phi(\xi_m)\subset \mathcal{K}$, there are $v_m\in
L^\infty(\om\times (0,T))$, $m=1, 2, \dots$, satisfying
\begin{equation}\label{Control:9}
\|v_m\|_{L^\infty(\om\times (0,T))}\leq
\kappa\|w_0\|_{L^2(\Omega)}\;\;\mbox{for all}\;\;
m\geq 1,
\end{equation}
\begin{equation}\label{Control:10}
\left\{
\begin{array}{lll}
(w_m)_t-\Delta w_m+a(x,t,\xi_m(x,t))w_m
=\chi_\omega \chi_E v_m&\mbox{in}&\Omega\times (0,T),\\
w_m=0&\mbox{on}&\partial\Omega\times (0,T),\\
w_m(0)=w_0&\mbox{in}&\Omega,\\
w_m(T)=0&\mbox{in}&\Omega
\end{array}\right.
\end{equation}
and
\begin{equation}\label{Control:11}
\|w_m\|_{L^2(0,T;H_0^1(\Omega))\cap W^{1,2}(0,T;H^{-1}(\Omega))}+\|w_m\|_{L^\infty(\om\times (0,T))}\leq 1.
\end{equation}
Thus, there is a
control $v$ and a subsequence of $\{m\}_{m\geq 1}$, still denoted by
itself, so that
\begin{equation}\label{Control:12}
v_m\rightarrow v\;\;\mbox{weakly star in}\;\;L^\infty(\om\times (0,T)),
\end{equation}
\begin{equation}\label{Control:13}
\begin{array}{ll}
w_m\rightarrow w&\mbox{weakly in}\;\;L^2(0,T;H_0^1(\Omega))\cap W^{1,2}(0,T;H^{-1}(\Omega)),\\
&\mbox{weakly star in}\;\;L^\infty(\om\times (0,T)),
\end{array}
\end{equation}
and
\begin{equation}\label{Control:14}
w_m(T)\rightarrow w(T)\;\;\mbox{strongly in}\;\;L^2(\Omega).
\end{equation}
Finally, passing to the limit for $m\rightarrow +\infty$ in
(\ref{Control:9})-(\ref{Control:11}), making use of
(\ref{Control:7}) and (\ref{Control:12})-(\ref{Control:14}),
we obtain that $w\in \Phi(\xi)$.

Now, by conclusions in  Step 1-Step 3, we can apply  Kakutani's
fixed point theorem to get  a  function $w\in \mathcal{K}$ so that
$w\in \Phi(w)$, which, combined with Proposition~\ref{Energy:1}, indicates $w\in C([0,T];C_0(\om))$.
Moreover, since
\begin{equation*}
a(x,t,w(x,t))w(x,t)=f(\phi(x,t)+w(x,t))-f(\phi(x,t)),
\end{equation*}
the results follow at once. This completes
the proof of this proposition.
\end{proof}
\begin{Remark}~In this proposition, only the hypothesis $(H_1)$ is used.
\end{Remark}

The next proposition is concerned with the existence of  admissible controls for the problem $(P)$.

\begin{Proposition}\label{Exist} There exists an admissible control for the problem $(P)$.
\end{Proposition}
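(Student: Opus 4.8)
The plan is to use the dissipativity of the uncontrolled dynamics, which is guaranteed by the sign condition $(H_2)$, to steer the state into the small $C(\o\om)$-ball on which the local null controllability of Proposition~\ref{Control} is available, and then to glue the resulting two controls together. First observe that $(H_1)$ and $(H_2)$ force $f(0)=0$: since $f$ is continuous with $f(r)\ge 0$ for $r>0$ and $f(r)\le 0$ for $r<0$, letting $r\to 0$ gives $f(0)=0$. Next consider the free solution $y(\cdot;y_0,0)$, i.e. the solution of (\ref{eqn:1}) with $u\equiv 0$. Running the comparison argument already used in the proof of Proposition~\ref{Energy:2} (the one yielding (\ref{Energy:2-10-1})) with $g\equiv 0$, the sign condition gives
\[
|y(x,t;y_0,0)|\le \psi(x,t)\quad\mbox{for a.e. }(x,t)\in\om\times(0,+\infty),
\]
where $\psi$ is the solution of the linear heat equation with homogeneous Dirichlet condition and $\psi(0)=|y_0|$, namely $\psi(t)=S(t)|y_0|$. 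Denoting by $\l_1>0$ the first eigenvalue of $-\Delta$ under the homogeneous Dirichlet condition, for $t\ge 1$ one has $\|S(t-1)|y_0|\|_{L^2(\om)}\le e^{-\l_1(t-1)}\|y_0\|_{L^2(\om)}$, while $S(1)$ maps $L^2(\om)$ boundedly into $C(\o\om)$ by parabolic smoothing; hence
\[
\|y(t;y_0,0)\|_{C(\o\om)}\le \|S(1)\|_{\mathcal L(L^2(\om),C(\o\om))}\,e^{-\l_1(t-1)}\|y_0\|_{L^2(\om)}\longrightarrow 0\quad\mbox{as } t\to+\infty.
\]

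Now fix $T=1$ and $E=(0,1)$, and apply Proposition~\ref{Control} with $\phi\equiv 0$; this provides two constants $\rho_0,\kappa>0$ depending only on $f$. By the decay just established, I can choose $T_0>0$ so large that $w_0\triangleq y(T_0;y_0,0)$ satisfies
\[
\|w_0\|_{C(\o\om)}\le \min\Big\{\rho_0,\ \frac{M}{\kappa\,|\om|^{1/2+1/q}}\Big\}.
\]
Since $f(0)=0$, the nonlinear term in Proposition~\ref{Control} with $\phi\equiv 0$ is $f(0+w)-f(0)=f(w)$, so the proposition yields a control $v\in L^\infty(\om\times(0,1))$ with $\|v\|_{L^\infty(\om\times(0,1))}\le\kappa\|w_0\|_{L^2(\om)}$ and a state $w\in C([0,1];C_0(\om))$ solving $w_t-\Delta w+f(w)=\chi_\omega\chi_E v$ on $\om\times(0,1)$ with $w(0)=w_0$ and $w(1)=0$. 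Finally define $u$ by $u\equiv 0$ on $[0,T_0]$ and $u(t)\triangleq \chi_E(t-T_0)\,v(t-T_0)$ for $t\in(T_0,T_0+1]$. Then for a.e. $t$,
\[
\|u(t)\|_{L^q(\om)}\le |\om|^{1/q}\|v\|_{L^\infty(\om\times(0,1))}\le \kappa\,|\om|^{1/2+1/q}\|w_0\|_{C(\o\om)}\le M,
\]
so $u\in\mathcal U$. By the time-invariance of (\ref{eqn:1}) and the uniqueness in Proposition~\ref{Energy:2}, the translate $t\mapsto y(T_0+t;y_0,u)$ coincides with $w$ on $[0,1]$; in particular $y(T_0+1;y_0,u)=w(1)=0$, and therefore $u\in\mathcal U_{ad}$.

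I expect the only genuine difficulty to lie in the first step: showing that the free solution tends to $0$ in the $C(\o\om)$-norm, because Proposition~\ref{Control} demands smallness precisely in that norm. This is handled by coupling the comparison principle (already available from Proposition~\ref{Energy:2}) with the exponential $L^2$-decay of the Dirichlet heat semigroup and its $L^2\to C(\o\om)$ smoothing. Everything after that is routine: a direct invocation of the local null controllability followed by an elementary concatenation of controls and the norm bookkeeping showing $u\in\mathcal U$.
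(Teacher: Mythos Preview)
Your proposal is correct and follows essentially the same strategy as the paper: let the uncontrolled dynamics run until the state is small in $C(\overline\Omega)$, then invoke Proposition~\ref{Control} with $\phi\equiv 0$ (using $f(0)=0$) to drive it to zero in unit time, and concatenate. The only cosmetic difference is in how the $C(\overline\Omega)$-decay is obtained: the paper first derives $L^2$-decay of the nonlinear flow via an energy estimate and then applies the comparison argument on a single unit interval, whereas you apply the comparison once globally and use the decay and smoothing of the linear semigroup; both routes yield the same bound. One trivial point: extend your $u$ by $0$ on $(T_0+1,+\infty)$ so that $u\in\mathcal U$ as required.
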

\begin{proof}~For any $T_0>0$ fixed, we consider the following equation
\begin{equation}\label{Exist:1}
\left\{
\begin{array}{lll}
y_t-\Delta y+f(y)=0&\mbox{in}&\Omega\times (0,T_0+1),\\
y=0&\mbox{on}&\partial\Omega\times (0,T_0+1),\\
y(0)=y_0&\mbox{in}&\Omega.
\end{array}\right.
\end{equation}
It follows from Proposition~\ref{Energy:2} that (\ref{Exist:1}) has a unique solution in
$C([0,T_0+1];C_0(\om))\cap L^2(0,T_0+1;H_0^1(\om))\cap W^{1,2}(0,T_0+1;H^{-1}(\om))$,
denoted it by $y(\cdot;y_0,0)$. Multiplying the first equation of (\ref{Exist:1}) by $y(t;y_0,0)$
and integrating it over $\om$, by $(H_2)$, we get that
$$
\frac{1}{2}\frac{d}{dt}\int_\om |y(t;y_0,0)|^2\,dx
+\int_\om |\nabla y(t;y_0,0)|^2\,dx\leq 0,
$$
which implies
\begin{equation*}
\frac{d}{dt}\|y(t;y_0,0)\|_{L^2(\om)}^2+2\lambda_1\|y(t;y_0,0)\|_{L^2(\om)}^2\leq 0\;\;\;\mbox{for a.e.} \;t\in (0,T_0+1).
\end{equation*}
Here $\lambda_1>0$ is the first eigenvalue of $-\Delta$ with zero boundary condition.
Multiplying the above inequality by $e^{2\lambda_1 t}$ and integrating it over $(0,t)$, after some calculations, we obtain that
\begin{equation}\label{Exist:2}
\|y(T_0;y_0,0)\|_{L^2(\om)}\leq e^{-\lambda_1 T_0}\|y_0\|_{L^2(\om)}
\end{equation}
and
\begin{equation}\label{Exist:2-1}
\|y(T_0+1;y_0,0)\|_{L^2(\om)}\leq e^{-\lambda_1 (T_0+1)}\|y_0\|_{L^2(\om)}.
\end{equation}

Set
\begin{equation}\ll{Exist:3-1}
w(t)\triangleq y(t+T_0;y_0,0),\;\;\;\forall\;t\in [0,1].
\end{equation}
Then $w\in C([0,1];C_0(\om))$ satisfies
\begin{equation*}
\left\{
\begin{array}{lll}
w_t-\Delta w+f(w)=0&\mbox{in}&\Omega\times (0,1),\\
w=0&\mbox{on}&\partial\Omega\times (0,1),\\
w(0)=y(T_0;y_0,0)&\mbox{in}&\Omega.
\end{array}\right.
\end{equation*}
By the similar arguments used to obtain (\ref{Energy:2-10-1}), we have
\begin{equation}\label{Exist:3-2}
\|w(1)\|_{C(\o \om)}\leq \|S(1) |y(T_0;y_0,0)|\|_{C(\o \om)}.
\end{equation}
Here and throughout this Proposition, $S(\cdot)$ is the $C_0$ semigroup generated by 
Dirichlet-Laplacian on $L^2(\om)$.
It follows from Proposition 4.4 of Chapter 1 in \cite{Barbu:1} that
\begin{equation*}
\|S(1)|y(T_0;y_0,0)|\|_{C(\o \om)}\leq C\|y(T_0;y_0,0)\|_{L^1(\om)}.
\end{equation*}
Here $C>0$ is a  constant independent of $T_0$.
From the latter inequality, (\ref{Exist:3-1}), (\ref{Exist:3-2}) and (\ref{Exist:2}), we obtain that
\begin{equation}\label{Exist:3-3}
\|y(T_0+1;y_0,0)\|_{C(\o \om)}\leq C|\om|^{\frac{1}{2}}e^{-\lambda_1 T_0} \|y_0\|_{L^2(\om)}.
\end{equation}

By Proposition~\ref{Control}, there exist two positive constants
$\rho_0\triangleq \rho_0(f)$ and $\kappa\triangleq \kappa(f)$ , so that
for any $\varphi_0\in C_0(\om)$ with $\|\varphi_0\|_{C(\o \om)}\leq \rho_0$,
there exists
a function $v\in L^\infty(0,1;L^q(\om))$ with
\begin{equation}\label{Exist:4}
\|v\|_{L^\infty(0,1;L^q(\om))}\leq \kappa \|\varphi_0\|_{L^2(\om)},
\end{equation}
so that $\varphi\in C([0,1];C_0(\om))$ satisfies
\begin{equation}\label{Exist:5}
\left\{
\begin{array}{lll}
\varphi_t-\Delta \varphi+f(\varphi)=\chi_\omega v&\mbox{in}&\om\times (0,1),\\
\varphi=0&\mbox{on}&\partial\om\times (0,1),\\
\varphi(0)=\varphi_0&\mbox{in}&\om,\\
\varphi(1)=0&\mbox{in}&\om.
\end{array}\right.
\end{equation}

Now, we take
\begin{equation*}
T_0\triangleq\frac{1}{\lambda_1}\left(\left|\ln\frac{C|\om|^{\frac{1}{2}}\|y_0\|_{L^2(\om)}}{\rho_0}\right|
+\left|\ln\frac{\kappa\|y_0\|_{L^2(\om)}}{M}\right|\right)+1.
\end{equation*}
Then it follows from (\ref{Exist:2-1}) and (\ref{Exist:3-3})-(\ref{Exist:5}) that there exists a control $u\in L^\infty(0,1;L^q(\om))$ with
\begin{equation*}
\|u\|_{L^\infty(0,1;L^q(\om))}\leq M,
\end{equation*}
so that $z\in C([0,1];C_0(\om))$ satisfies
\begin{equation*}
\left\{
\begin{array}{lll}
z_t-\Delta z+f(z)=\chi_\omega u&\mbox{in}&\om\times (0,1),\\
z=0&\mbox{on}&\partial\om\times (0,1),\\
z(0)=y(T_0+1;y_0,0)&\mbox{in}&\om,\\
z(1)=0&\mbox{in}&\om.
\end{array}\right.
\end{equation*}
Finally, it is easy to check the function defined by
\begin{equation*}
\o u(t)\triangleq\left\{
\begin{array}{ll}
0,&t\in (0,T_0+1),\\
u(t-T_0-1),&t\in (T_0+1,T_0+2),\\
0,&t\in (T_0+2,+\infty),
\end{array}\right.
\end{equation*}
is an admissible control for the problem $(P)$.
\end{proof}

Based on Proposition~\ref{Exist}, we shall show the existence of optimal solutions for the problem $(P)$.
\begin{Proposition}\label{Optimal} There exists at least one optimal solution for the problem $(P)$.
\end{Proposition}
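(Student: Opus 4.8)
The plan is to use the direct method of the calculus of variations: take a minimizing sequence of admissible controls, extract a weakly-star convergent subsequence, and pass to the limit in the state equation; the core of the argument is to upgrade the weak convergence of the states to strong convergence so that the semilinear term $f$ can be passed to the limit. Note first that $T^*\in[0,+\infty)$ is well defined and finite, since $\mathcal{U}_{ad}\neq\emptyset$ by Proposition~\ref{Exist}.

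First I would choose a minimizing sequence $u_n\in\mathcal{U}_{ad}$ with $T_n:=T(u_n)\to T^*$. Since $T_n\to T^*$, for $n$ large $T_n<\o T:=T^*+1$, and I extend each $u_n$ to $[0,\o T]$ by setting $u_n\equiv 0$ on $(T_n,\o T)$. Because $(H_2)$ together with continuity forces $f(0)=0$, the zero function solves the state equation on $(T_n,\o T)$ with null data and null source, so by the uniqueness in Proposition~\ref{Energy:2} the state $y_n:=y(\cdot;y_0,u_n)$ vanishes on $[T_n,\o T]$; in particular $y_n(t)=0$ whenever $t\geq T_n$. The extended controls still satisfy $\|u_n(t)\|_{L^q(\om)}\leq M$ a.e., so they are bounded in $L^\infty(0,\o T;L^q(\om))$, which is the dual of $L^1(0,\o T;L^{q'}(\om))$ (with $1/q+1/q'=1$, using reflexivity of $L^q(\om)$). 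By Banach--Alaoglu I extract a subsequence with $u_n\to u^*$ weakly star in this space; since the constraint ball is convex and weakly-star closed, $u^*\in\mathcal{U}$.

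Next I would establish compactness of the states. By Proposition~\ref{Energy:2}, $\|y_n\|_{C([0,\o T];C(\o\om))}\leq C(\o T)(M\o T^{1/q}+\|y_0\|_{C(\o\om)})$, so the $y_n$ are uniformly bounded, say $|y_n|\leq R$. Then $f(y_n)$ is bounded in $L^\infty(\om\times(0,\o T))$ (using $(H_1)$ on $[-R,R]$), and writing the equation as $(y_n)_t-\Delta y_n=\chi_\omega u_n-f(y_n)$ with right-hand side bounded in $L^q(0,\o T;L^q(\om))$, parabolic $L^q$-estimates bound $y_n$ in $L^2(0,\o T;H_0^1(\om))\cap W^{1,2}(0,\o T;H^{-1}(\om))$. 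By the Aubin--Lions lemma this space embeds compactly into $L^2(0,\o T;L^2(\om))$, so along a further subsequence $y_n\to y^*$ strongly in $L^2(0,\o T;L^2(\om))$ and a.e. Combined with $|y_n|\leq R$ and the continuity of $f$, dominated convergence gives $f(y_n)\to f(y^*)$ strongly in $L^2(\om\times(0,\o T))$, while $\chi_\omega u_n\to\chi_\omega u^*$ weakly. Passing to the limit in the weak formulation shows that $y^*$ solves the state equation with control $u^*$, so $y^*=y(\cdot;y_0,u^*)\in C([0,\o T];C_0(\om))$ by Proposition~\ref{Energy:2}.

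Finally I would verify optimality. For every fixed $t>T^*$ we have $T_n<t$ for $n$ large, whence $y_n(t)=0$; the a.e. convergence gives $y^*(t)=0$ for a.e.\ $t>T^*$, and continuity of $y^*$ then yields $y^*(t)=0$ for all $t\geq T^*$, in particular $y^*(T^*)=0$. Hence $u^*$ is admissible with $T(u^*)\leq T^*$, and by the definition of $T^*$ as the infimum, $T(u^*)=T^*$; thus $u^*$ is an optimal solution. The main obstacle is precisely the nonlinear term: weak convergence of controls and states alone does not control $f(y_n)$, so the crux is the compactness argument (the uniform $C([0,\o T];C(\o\om))$ bound from Proposition~\ref{Energy:2} feeding parabolic regularity and Aubin--Lions) that promotes the states to strong convergence. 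A secondary point requiring care is the variable terminal time, which I handle above by extending the controls by zero and invoking the sign condition $(H_2)$ to freeze the states at zero past $T_n$.
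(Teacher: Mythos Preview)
Your proof is correct and follows essentially the same route as the paper: extend a minimizing sequence by zero past $T_n$ (using $f(0)=0$ from $(H_2)$), get uniform $C([0,\o T];C(\o\om))$ bounds from Proposition~\ref{Energy:2}, use parabolic energy bounds plus Aubin--Lions for strong state convergence to pass the nonlinearity to the limit, and conclude $y^*(T^*)=0$ by continuity. The only cosmetic differences are that the paper states the compactness as strong convergence in $C([0,T^*+1];L^2(\om))$ and then verifies $z(T^*)=0$ via the triangle inequality $\|z(T^*)-z(T_n)\|+\|z(T_n)-z_n(T_n)\|\to 0$, whereas you use a.e.\ convergence on $(T^*,\o T)$ followed by continuity; both are equally valid. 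One terminological quibble: the bound in $L^2(0,\o T;H_0^1(\om))\cap W^{1,2}(0,\o T;H^{-1}(\om))$ comes from the standard variational energy estimate (test with $y_n$), not from ``parabolic $L^q$-estimates'' per se, but your conclusion is correct since the right-hand side is bounded in $L^2(\om\times(0,\o T))$.
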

\begin{proof}~Let $T^*\triangleq\mbox{inf}(P)$. It is obvious that $0\leq T^*<+\infty$.
Then there exist
sequences $\{T_n\}_{n\geq 1}$ and $\{u_n\}_{n\geq 1}\subset
\mathcal{U}$ so that
\begin{equation}\label{Optimal:1}
T^*=\lim_{n\rightarrow +\infty} T_n
\end{equation}
and
\begin{equation}\label{Optimal:2}
\left\{
\begin{array}{ll}
\partial_t y_n-\Delta y_n+f(y_n)=\chi_\omega u_n&\mbox{in}\;\;\Omega\times (0,T_n),\\
y_n=0&\mbox{on}\;\;\partial \Omega\times (0,T_n),\\
y_n(0)=y_0&\mbox{in}\;\;\Omega,\\
y_n(T_n)=0&\mbox{in}\;\;\Omega,
\end{array}\right.
\end{equation}
where $y_n(\cdot)\in C([0,T_n];C_0(\om))$.
By (\ref{Optimal:1}) and (\ref{Optimal:2}), we can assume that
$0<T_n<T^*+1$. Set
\begin{equation}\label{Optimal:3}
v_n(t)\triangleq\left\{\begin{array}{ll}
u_n(t) ,&t\in [0,T_n),\\
0,&t\in [T_n,+\infty)
\end{array}
\right.
\;\;\;\mbox{and}\;\;\;z_n(t)\triangleq\left\{
\begin{array}{ll}
y_n(t),&t\in [0,T_n),\\
0,&t\in [T_n,T^*+1].
\end{array}
\right.
\end{equation}
Since $\{u_n\}_{n\geq 1}\subset \mathcal{U}$, it follows from (\ref{Optimal:2}) and (\ref{Optimal:3})  that
\begin{equation}\label{Optimal:4}
\{v_n\}_{n\geq 1}\subset\mathcal{U}
\end{equation}
and that $z_n(\cdot)\in C([0,T^*+1];C_0(\om))$ satisfies
\begin{equation}\label{Optimal:5}
\left\{
\begin{array}{ll}
\partial_t z_n-\Delta z_n+f(z_n)=\chi_\omega v_n&\mbox{in}\;\;\Omega\times (0,T^*+1),\\
z_n=0&\mbox{on}\;\;\partial \Omega\times (0,T^*+1),\\
z_n(0)=y_0&\mbox{in}\;\;\Omega,\\
z_n(T_n)=0&\mbox{in}\;\;\Omega.
\end{array}\right.
\end{equation}
By (\ref{Optimal:5}), (\ref{Optimal:4}) and Proposition~\ref{Energy:2}, we obtain that
$$
\begin{array}{lll}
&&\|z_n\|_{L^2(0,T^*+1;H_0^1(\om))\cap W^{1,2}(0,T^*+1;H^{-1}(\om))}+\|z_n\|_{C([0,T^*+1];C(\o \om))}\\
&\leq&C(\|v_n\|_{L^q(0,T^*+1;L^q(\om))}+\|y_0\|_{C(\o \om)})\\
&\leq& C,\;\;\;\;\;\;\;\;\;\;\;\;\forall\;n\geq 1.
\end{array}
$$
Here $C>0$ is a constant independent of $n$.
This, combined with (\ref{Optimal:4}) and (\ref{Optimal:5}), implies  that there exists a subsequence of $\{n\}_{n\geq 1}$,  denoted
in the same manner, $z\in L^2(0,T^*+1;H_0^1(\om))\cap W^{1,2}(0,T^*+1;H^{-1}(\om))\cap L^\infty(\om\times (0,T^*+1))$ and
$v\in \mathcal{U}$, so that
\begin{equation}\label{Optimal:6}
\begin{array}{ll}
z_n\rightarrow z&\mbox{weakly in}\;\;L^2(0,T^*+1;H_0^1(\om))\cap W^{1,2}(0,T^*+1;H^{-1}(\om)),\\
&\mbox{weakly star in}\;L^\infty(\om\times (0,T^*+1))\;\mbox{and strongly in}\;C([0,T^*+1];L^2(\om)),\\
v_n\rightarrow v&\mbox{weakly star in}\;\;L^\infty(0,+\infty;L^q(\Omega)).
\end{array}
\end{equation}
Moreover, $v\in \mathcal{U}$ and $z$ satisfies
\begin{equation}\label{Optimal:7}
\left\{
\begin{array}{ll}
\partial_t z-\Delta z+f(z)=\chi_\omega v&\mbox{in}\;\;\Omega\times (0,T^*+1),\\
z=0&\mbox{on}\;\;\partial \Omega\times (0,T^*+1),\\
z(0)=y_0&\mbox{in}\;\;\Omega.
\end{array}\right.
\end{equation}
Finally, it follows from Proposition~\ref{Energy:2}, the fourth equation of (\ref{Optimal:5}) and
(\ref{Optimal:6}) that
\begin{equation*}
z\in C([0,T^*+1];C_0(\om))
\end{equation*}
and
\begin{eqnarray*}
\|z(T^*)\|_{L^2(\om)}&\leq&\|z(T^*)-z(T_n)\|_{L^2(\om)}+\|z(T_n)-z_n(T_n)\|_{L^2(\om)}\\
&\leq&\|z(T^*)-z(T_n)\|_{L^2(\om)}+\|z-z_n\|_{C([0,T^*+1];L^2(\om))}\rightarrow 0,
\end{eqnarray*}
which, combined with (\ref{Optimal:7}), complete the proof.
\end{proof}

At the end of this section, we give the proof of Theorem~\ref{Bang}.\\

\begin{proof}~By contradiction, there would exist an optimal control $u^*$, a positive
constant $\e_0<M$ and a measurable subset $E^*\subset (0,T^*)$ with
$|E^*|>0$ so that
\begin{equation}\ll{Bang:1}
\|u^*(t)\|_{L^q(\om)}\leq M-\e_0,\;\;\;\forall\;t\in
E^*.
\end{equation}
Let $\delta_0\in (0,|E^*|/2)$ and write
\begin{equation*}
E^*_{\delta_0}\triangleq\{t\in (0,T^*): t+\delta_0\in E^*\}.
\end{equation*}
One can easily check that
\begin{equation*}
|E^*_{\delta_0}|=|E^*\cap (\delta_0,T^*)|\geq |E^*|-\delta_0>
2^{-1}|E^*|>0.
\end{equation*}
Denote $y^*(t)\triangleq y(t;y_0,u^*)$ and
$z^*_{\delta_0}(t)\triangleq y^*(t+\delta_0)$. Then it holds that
\begin{equation}\ll{Bang:2}
\left\{
\begin{array}{lll}
(z^*_{\delta_0})_t-\Delta z^*_{\delta_0}+f(z^*_{\delta_0})=\chi_\omega u^*(t+\delta_0)
&\mbox{in}&\om\times (0,T^*-\delta_0),\\
z^*_{\delta_0}=0&\mbox{on}&\p\om\times (0,T^*-\delta_0),\\
z^*_{\delta_0}(0)=y^*(\delta_0)&\mbox{in}&\om,\\
z_{\delta_0}^*(T^*-\delta_0)=0&\mbox{in}&\om.
\end{array}\right.
\end{equation}

By Proposition~\ref{Control}, there are two positive constants
$$
\rho_0\triangleq \rho_0(E_{\delta_0}^*,T^*,\delta_0,f,\|z_{\delta_0}^*\|_{L^\infty(\om\times (0,T^*-\delta_0))})
$$
and
$$
\kappa\triangleq \kappa(E_{\delta_0}^*,T^*,\delta_0,f,\|z_{\delta_0}^*\|_{L^\infty(\om\times (0,T^*-\delta_0))}),
$$
so that
for any $w_0\in C_0(\om)$ with $\|w_0\|_{C(\o \om)}\leq \rho_0$,
there exists
a function $v\in L^\infty(0,T^*-\delta_0;L^q(\om))$ with
\begin{equation}\ll{Bang:3}
\|v\|_{L^\infty(0,T^*-\delta_0;L^q(\om))}\leq \kappa \|w_0\|_{L^2(\om)},
\end{equation}
so that $w\in C([0,T^*-\delta_0];C_0(\om))$ satisfies
\begin{equation}\ll{Bang:4}
\left\{
\begin{array}{lll}
w_t-\Delta w+f(z_{\delta_0}^*+w)-f(z_{\delta_0}^*)=\chi_\omega \chi_{E_{\delta_0}^*} v&\mbox{in}&\om\times (0,T^*-\delta_0),\\
w=0&\mbox{on}&\partial\om\times (0,T^*-\delta_0),\\
w(0)=w_0&\mbox{in}&\om,\\
w(T^*-\delta_0)=0&\mbox{in}&\om.
\end{array}\right.
\end{equation}

Now, we choose $\delta\in (0,\delta_0)$ so that
\begin{equation*}
\|y^*(\delta)-y^*(\delta_0)\|_{C(\o \om)}+\|y^*(\delta)-y^*(\delta_0)\|_{L^2(\om)}
\leq \min\{\rho_0,\kappa^{-1}\varepsilon_0\}.
\end{equation*}
This, together with (\ref{Bang:3}) and (\ref{Bang:4}), indicates that there exists a control
$v_\delta\in L^\infty(0,T^*-\delta_0;L^q(\om))$ with
\begin{equation}\ll{Bang:6}
\|v_\delta\|_{L^\infty(0,T^*-\delta_0;L^q(\om))}\leq \kappa \|y^*(\delta)-y^*(\delta_0)\|_{L^2(\om)}\leq \e_0,
\end{equation}
so that $w_\delta\in C([0,T^*-\delta_0];C_0(\om))$ satisfies
\begin{equation}\ll{Bang:7}
\left\{
\begin{array}{lll}
(w_\delta)_t-\Delta w_\delta+f(z_{\delta_0}^*+w_\delta)-f(z_{\delta_0}^*)
=\chi_\omega \chi_{E_{\delta_0}^*} v_\delta&\mbox{in}&\om\times (0,T^*-\delta_0),\\
w_\delta=0&\mbox{on}&\partial\om\times (0,T^*-\delta_0),\\
w_\delta(0)=y^*(\delta)-y^*(\delta_0)&\mbox{in}&\om,\\
w_\delta(T^*-\delta_0)=0&\mbox{in}&\om.
\end{array}\right.
\end{equation}
It follows from (\ref{Bang:2}) and (\ref{Bang:7}) that
\begin{equation}\ll{Bang:8}
\left\{
\begin{array}{lll}
(z_{\delta_0}^*+w_\delta)_t-\Delta (z_{\delta_0}^*+w_\delta)
+f(z_{\delta_0}^*+w_\delta)&&\\
\;\;\;\;\;\;\;\;\;\;\;\;\;\;\;\;\;\;\;\;\;\;\;\;\;\;\;\;\;\;\;\;\;\;
=\chi_\omega (u^*(t+\delta_0)+\chi_{E_{\delta_0}^*} v_\delta)&\mbox{in}&\om\times (0,T^*-\delta_0),\\
z_{\delta_0}^*+w_\delta=0&\mbox{on}&\partial\om\times (0,T^*-\delta_0),\\
(z_{\delta_0}^*+w_\delta)(0)=y^*(\delta)&\mbox{in}&\om,\\
(z_{\delta_0}^*+w_\delta)(T^*-\delta_0)=0&\mbox{in}&\om.
\end{array}\right.
\end{equation}
Moreover, by (\ref{Bang:1}) and (\ref{Bang:6}), the function
\begin{equation}\ll{Bang:9}
u_\delta^*(t)\triangleq u^*(t+\delta_0)+\chi_{E_{\delta_0}^*}v_\delta(t),\;\;\;t\in (0,T^*-\delta_0),
\end{equation}
satisfies
\begin{equation}\ll{Bang:10}
\|u_\delta^*(t)\|_{L^q(\om)}\leq M\;\;\mbox{for a.e.}\;\;t\in (0,T^*-\delta_0).
\end{equation}
Finally, we define
\begin{equation*}
\widetilde{u}_\delta^*(t)\triangleq\left\{
\begin{array}{ll}
u^*(t),&t\in (0,\delta],\\
u_\delta^*(t-\delta),&t\in (\delta,T^*-\delta_0+\delta],\\
0,&t\in (T^*-\delta_0+\delta,+\infty).
\end{array}\right.
\end{equation*}
Then by (\ref{Bang:8})-(\ref{Bang:10}) and Proposition~\ref{Energy:2}, we obtain that
\begin{equation*}
\widetilde{u}_\delta^*\in \mathcal{U},\;\;y(\cdot;y_0,\widetilde{u}_\delta^*)\in C([0,T^*-\delta_0+\delta];C_0(\om))\;\;
\mbox{and}\;\;y(T^*-\delta_0+\delta;y_0,\widetilde{u}_\delta^*)=0.
\end{equation*}
These contradict the optimality of $T^*$ and complete the proof.
\end{proof}

\end{document}